\newcommand{\Diltwo}[1]{\Dilat_{#1}}
\def\Dilat{\operatorname{D}}
\newcommand{\Bignorm}[1]{\Bigl\|#1\Bigr\|}
\newcommand{\bignorm}[1]{\bigl\lVert#1\bigr\rVert}
\newcommand{\wholenorm}[1]{\left\|#1\right\|}
\newcommand{\bC}{{\mathbb C}}
\newcommand{\Rst}{{\mathbb R}}
\newcommand{\Rdst}{{\Rst^d}}
\newcommand{\set}[2]{\big\{ \, #1 \, \big| \, #2 \, \big\}}
\newcommand{\norm}[1]{\lVert#1\rVert}
\newcommand{\Esp}{{\boldsymbol E}}
\newcommand{\Zst}{{\mathbb Z}}
\newcommand{\Zdst}{{\Zst^d}}
\newcommand{\supp}{\operatorname{supp}}
\newcommand{\Lsp}{{\boldsymbol L}}
\newcommand{\Ltsp}{{\Lsp^2}}
\newcommand{\LtRd}{{\Ltsp(\Rst^d)}}
\newcommand{\Nst}{{\mathbb N}}
\def\Esp{{\boldsymbol E}}
\newcommand{\finiteset}{X}
\newcommand{\ip}[2]{\ensuremath{\left<#1,#2\right>}}
\newcommand{\sett}[1]{\ensuremath{\left \{ #1 \right \}}}
\newcommand{\abs}[1]{\ensuremath{\left| #1 \right| }}
\newcommand{\w}[2]{\left(1+\abs{#1}\right)^{#2}}
\newcommand{\Schwartz}{\mathcal{S}}
\newcommand{\bes}{{\dot{B}^{\alpha, q}_{p}}}
\newcommand{\besseq}{{\dot{b}^{\alpha, q}_{p}}}
\newcommand{\trie}{{\dot{F}^{\alpha, q}_{p}}}
\newcommand{\trieseq}{{\dot{f}^{\alpha, q}_{p}}}
\newcommand{\essp}{{\mathbf e^{\alpha, q}_{p}}}
\newcommand{\Essp}{{\mathbf E^{\alpha, q}_{p}}}
\newcommand{\Esspdual}{{\mathbf E^{-\alpha, q'}_{p'}}}
\newcommand{\EsspHil}{{\mathbf E^{0, 2}_{2}}}
\newcommand{\esspz}{{\mathbf e^{\alpha_0, q_0}_{p_0}}}
\newcommand{\Esspz}{{\mathbf E^{\alpha_0, q_0}_{p_0}}}
\newcommand{\esspo}{{\mathbf e^{\alpha_1, q_1}_{p_1}}}
\newcommand{\Esspo}{{\mathbf E^{\alpha_1, q_1}_{p_1}}}
\newcommand{\varpsi}{\tau}
\newcommand{\tilpsil}{{\widetilde{\psi_\lambda}}}
\newcommand{\tilpsijl}{{\widetilde{\psi}_{j,\lambda}}}
\newcommand{\psijl}{\psi_{j,\lambda}}
\newcommand{\coefpsi}{{C_\psi}}
\newcommand{\recvarpsi}{{S_{\varpsi}}}
\newcommand{\coefPsi}{{C_\Psi}}
\newcommand{\recPsi}{{S_\Psi}}
\newcommand{\tilPsi}{{\widetilde{\Psi}}}
\newcommand{\coeftilPsi}{{C_\tilPsi}}
\newcommand{\rectilPsi}{{S_\tilPsi}}
\newcommand{\Phieps}{\Phi^\varepsilon}
\newcommand{\tilphijl}{{\widetilde{\phi}_{j,\lambda}}}
\newcommand{\phijl}{\phi_{j,\lambda}}
\newcommand{\coefPhi}{{C_\Phi}}
\newcommand{\recPhi}{{S_\Phi}}
\newcommand{\tilPhi}{{\widetilde{\Phi}}}
\newcommand{\coeftilPhi}{{C_\tilPhi}}
\newcommand{\phieps}{\varphi^\varepsilon}
\newcommand{\phiepsjl}{\varphi^\varepsilon_{j,\lambda}}
\newcommand{\coefPhieps}{{C_{\Phi^\varepsilon}}}
\newcommand{\recPhieps}{{S_{\Phi^\varepsilon}}}
\newcommand{\quasiA}{p_A}
\newcommand{\alphanot}{\beta}
\newcommand{\consexp}{c}
\newcommand{\wavclass}{\ensuremath{\mathcal{C}(A,\Lambda)}}
\newcommand{\supereq}{=}
\newtheorem{deff}{Definition}
\newtheorem{theo}{Theorem}
\newtheorem*{theo*}{Theorem}
\newtheorem{lemma}{Lemma}
\newtheorem{coro}{Corollary}
\newtheorem{remark}{Remark}
\title[Non-uniform painless decompositions]{Non-uniform painless
decompositions for anisotropic Besov and Triebel-Lizorkin spaces}
\author[C. Cabrelli]{Carlos Cabrelli}
\address{Departamento de
Matem\'atica \\ Facultad de Ciencias Exactas y Naturales\\ Universidad
de Buenos Aires\\ Ciudad Universitaria, Pabell\'on I\\ 1428 Capital
Federal\\ ARGENTINA\\ and IMAS, UBA-CONICET, Argentina}
\email[Carlos~Cabrelli]{cabrelli@dm.uba.ar}
\author[U.Molter]{Ursula Molter}
\email[Ursula~M.~Molter]{umolter@dm.uba.ar}
\author[J.~L.~Romero]{Jos\'e Luis Romero}
\email[Jos\'e Luis Romero]{jlromero@dm.uba.ar}
\thanks{The authors acknowledge
support from the following grants: 
PICT 2006-00177 (ANPCyT), and PIP 2008-398 (CONICET)
and UBACyT X149 and X028 (UBA)}
\keywords{Besov spaces, Triebel-Lizorkin spaces, Anisotropic function spaces, Non-uniform atomic decomposition, Affine systems}
\subjclass[2010]{42B35, 46E35, 42C40, 42C15}
\begin{document}
\begin{abstract}
In this article we construct affine systems that provide a simultaneous
atomic decomposition for a wide class of functional spaces including the
Lebesgue spaces $L^p(\Rdst)$, $1<p<+\infty$. The novelty and difficulty of this
construction is that we allow for non-lattice translations.

We prove that for an arbitrary expansive matrix $A$ and any set $\Lambda$ -
satisfying a certain spreadness condition but otherwise irregular- there exists
a smooth window whose translations along the elements of $\Lambda$ and dilations
by powers of $A$ provide an atomic decomposition for the whole range of the
anisotropic Triebel-Lizorkin spaces. The generating window can be either chosen
to be bandlimited or to have compact support.

To derive these results we start with a known general ``painless'' construction
that has recently appeared in the literature. We show that this construction
extends to Besov and Triebel-Lizorkin spaces by providing adequate dual
systems.
\end{abstract}
\maketitle
\section{Introduction}
The membership of a distribution to a large number of classical functional
spaces can be characterized by its Littlewood-Paley decomposition,
which consists of a sequence of smooth frequency cut-offs at dyadic scales.
The functional spaces that can be described in that way are
generically known as Besov and Triebel-Lizorkin spaces. This class includes
among others the Lebesgue spaces $L^p(\Rdst), (1<p<+\infty)$, Sobolev spaces
and Lipschitz spaces.

More recently, anisotropic variants of these spaces have been introduced,
where the dyadic scales are replaced by more general ones allowing
different spatial directions to be dilated by different factors (see
\cite{lotr79, sctr73, gata02, tr04-2, bo05-1, boho06, bo07b, bo08-3}
and the references therein). These variants are useful for example to study
anisotropic smoothness conditions.

Time-scale atomic decompositions are a very powerful tool to analyze Besov and
Triebel-Lizorkin spaces. The technique consists of representing a general
distribution $f \in \mathcal{S}'(\Rdst)$ as a superimposition of
atoms $\sett{\psi_{j,k}: j\in\Zst, k\in \Zdst}$,
\begin{align*}
f = \sum_{j \in \Zst} \sum_{k \in \Zdst} c_{j,k} \psi_{j,k}.
\end{align*}
The membership of $f$ to a particular Besov or Triebel-Lizorkin space is
characterized by the decay of its coefficients $c_{j,k}$. In the classical
(isotropic) case, the atoms are of the form
$\psi_{j,k}(x)=2^{-j/2}\psi(2^{-j}x-k)$, where $\psi$ is an adequate window
function called a wavelet. In the anisotropic case a number of
alternatives are possible. One of them is to replace the powers of 2 by
powers of a more general matrix $A$, yielding atoms of the form,
\begin{align*}
\set{\abs{\det(A)}^{-j/2} \psi(A^{-j} \cdot - k)}
{j \in \Zst, k \in \Zdst}.
\end{align*}
(See \cite{tr04-2} for other alternatives.)
Existence of atomic decompositions for anisotropic Besov and Triebel-Lizorkin
spaces is a well-known fact. There is an ample literature giving sets of
atoms with specific properties (see \cite{tr04-2} and the references therein).
The purpose of this article is to show
that these spaces also admit atomic decompositions where the
integer translations are replaced by translations along quite arbitrary sets.
Given a matrix $A \in \Rst^{d \times d}$ that is \emph{expansive} (i.e. all its
eigenvalues $\mu$ satisfy $\abs{\mu}>1$) and a set $\Lambda \subseteq \Rdst$
that satisfies a certain spreadness condition but is
otherwise irregular, we show that there exists a function $\psi \in
\mathcal{S}$ such that the irregular time-scale system,
\begin{align}
\label{eq_intro_irrwav}
W(\psi, A, \Lambda) :=
\set{\abs{\det(A)}^{-j/2} \psi(A^{-j} \cdot - \lambda)}
{j \in \Zst, \lambda \in \Lambda},
\end{align}
gives an atomic decomposition of the whole class of (anisotropic) Besov and
Triebel-Lizorkin spaces. The function $\psi$ can be chosen to be bandlimited
or to have compact support (in this latter case, since $\psi$ cannot have
infinitely many vanishing moments, we have to restrict the
decomposition to a subclass of spaces having a bounded degree of smoothness).

This result is new even in the isotropic case. Its relevance stems from the
fact that the set of translation nodes $\Lambda$ is not assumed to
have any kind of regularity (besides a mild spreadness condition). As a comparison to our work,
the only results giving such irregular time-scale decompositions that we are
aware of are: the one that comes from the general theory of atomic
decompositions of coorbit spaces associated with group
representations \cite{fegr89}, and the one obtained from oversampling Calder\'on's continuous
resolution of the identity \cite{gihaholawewe02, lisu12}.
These prove the existence of time-scale atomic
decompositions using irregular sets of translates, \emph{as long as they are
sufficiently dense} (in a sense that may be hard to quantify). In contrast,
our result proves that \emph{any} set of translates can be used (under a
mild spreadness assumption).

Moreover, the function $\psi$ is explicitly constructed following a very
concrete method. In fact, the starting point of the article is a
construction in \cite{alcamo04-1} that can be regarded as a generalization of
the so-called painless method of Daubechies, Grossmann, and Meyer
\cite{dagrme86}  - which we consequently call the \emph{generalized painless
method}. It mainly consists of using a number of geometric barriers
to ensure the validity of the so-called \emph{frame inequality},
\begin{align*}
A \norm{f}_{L^2}^2 \leq
\sum_{j \in \Zst, \lambda \in \Lambda}
\abs{\ip{f}{\abs{\det(A)}^{-j/2} \psi(A^{-j} \cdot - \lambda)}}^2
\leq B \norm{f}_{L^2}^2.
\end{align*}
This is enough to deduce that the atoms in Eq. \eqref{eq_intro_irrwav}
give an atomic decomposition of $L^2(\Rdst)$. This method of proof, however,
does not provide an explicit \emph{dual system} for those atoms,
that is, an explicit family of coefficient functional $c_{j,k}$ such that,
\begin{align}
\label{eq_intro_irrwav_exp}
f = \sum_{j,\lambda} c_{j,\lambda} \abs{\det(A)}^{-j/2}
\psi(A^{-j} \cdot - \lambda).
\end{align}
The existence of such a dual system can be deduced by Hilbert space
arguments, but not having explicitly produced them it is not clear whether
the expansion in Eq. \eqref{eq_intro_irrwav_exp} (valid in $L^2$)
extends to an atomic decomposition of all (anisotropic) Besov and
Triebel-Lizorkin spaces. This is not a trivial question. In sheer contrast to 
the case of time-frequency
analysis \cite{gr04-1,grle04,bacahela06,bacahela06-1,fogr05}, it is known
that there exist ``nice'' (isotropic) time-scale systems satisfying the frame
inequality but failing to give an atomic decomposition for a certain range of
$L^p$ spaces $(1<p<+\infty)$ \cite{tch87,tch89, come00} (see also
\cite{bula09, bula09-2}).

We will show that every wavelet system constructed following the
``generalized painless method'' yields an atomic decomposition of the
class of anisotropic Besov-Triebel-Lizorkin spaces. For atoms constructed
following that method, each scale interacts with a bounded number of other
scales. Even having this property, the fact that a time-scale system
satisfies the frame inequality does not always imply that it gives an
atomic decomposition of Besov and Triebel-Lizorkin spaces. To prove the
existence of well-behaved dual systems we need to resort to certain results from
A. Beurling on the balayage problem \cite{be66-3}.

The generating windows of these constructions are band-limited. However, often it is desirable to have generators that are
compactly supported in time. We devote the last section of this article to show how to adapt the previous results to obtain
atomic decompositions with generators that are smooth, compactly supported and with an arbitrary number of vanishing moments.

\subsection{Overview of the results}
The following is the main result of the article.
\begin{theo*}
Let $\Lambda \subseteq \Rdst$ be a set that is well-spread, that is,
\begin{align*}
& \sup_{x \in \Rdst} \#(\Lambda \cap ([-1/2,1/2]^d+\sett{x})) < +\infty,
\\
& \bigcup_{\lambda \in \Lambda} B_R(\lambda) = \Rdst,
\quad \mbox{for some }R>0.
\end{align*}
Let $A \in \Rst^{d\times d}$ be an expansive matrix (i.e. all its eigenvalues $\mu$ satisfy $\abs{\mu}>1$).

Then there exists a Schwartz class function $\psi$ with Fourier transform supported
on a compact set not containing the origin, such that the irregular wavelet
system,
\begin{align*}
W(\psi, A, \Lambda) = \set{\psijl := \abs{\det(A)}^{-j/2} \psi(A^{-j} \cdot - \lambda)}{j\in\Zst, \lambda \in \Lambda},
\end{align*}
provides an atomic decomposition for the whole class of anisotropic Besov and Triebel-Lizorkin spaces.
\end{theo*}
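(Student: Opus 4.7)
The plan is to split the argument into two stages: first produce an $\LtRd$ frame via the generalized painless method of \cite{alcamo04-1}, and then construct, scale by scale, a well-behaved dual system via Beurling's balayage theorem \cite{be66-3}, finally verifying that the resulting coefficient/synthesis pair gives an atomic decomposition for all anisotropic Besov and Triebel-Lizorkin spaces.

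First I would invoke the generalized painless construction to produce a Schwartz $\psi$ whose Fourier transform is supported in a compact set $K\subseteq\Rdst\setminus\{0\}$ and such that $W(\psi,A,\Lambda)$ is a frame for $\LtRd$. The compact frequency support forces the crucial \emph{finite-band} property: only a bounded number $N$ of consecutive scales $j$ have overlapping frequency supports, so that Littlewood-Paley-type arguments can be carried out scale by scale and then summed.

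Next, for each $j$ the atoms $\psijl$ live in the band-limited subspace $V_j:=\{f\in\LtRd:\supp(\widehat{f})\subseteq A^{*j}K\}$, and the rescaled nodes $\{A^{-j}\lambda\}_{\lambda\in\Lambda}$ inherit the spreadness of $\Lambda$ up to an affine change of variables. Beurling's balayage theorem, in its sampling/interpolation formulation for Paley-Wiener type spaces, then produces a Schwartz dual generator which, after dilation and translation, yields dual atoms $\tilphijl$ satisfying the scale-wise reproducing identity
\begin{align*}
P_{V_j}f=\sum_{\lambda\in\Lambda}\ip{f}{\psijl}\tilphijl, \quad f\in\LtRd,
\end{align*}
with uniform control on the appropriately renormalized Schwartz seminorms of the $\tilphijl$. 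Here $P_{V_j}$ denotes the orthogonal projection onto $V_j$.

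The main obstacle is lifting this $\LtRd$ identity to an atomic decomposition for $\bes$ and $\trie$. Using a partition of unity subordinate to the dilates of $\supp(\widehat{\psi})$, together with the finite-band property, I would decompose $f=\sum_j P_{V_j}f$ and hence obtain the candidate expansion
\begin{align*}
f=\sum_{j\in\Zst}\sum_{\lambda\in\Lambda}\ip{f}{\psijl}\tilphijl.
\end{align*}
To conclude that this converges in the Besov/Triebel-Lizorkin quasi-norm and characterizes membership, I would verify two boundedness statements: the analysis map $f\mapsto\{\ip{f}{\psijl}\}_{j,\lambda}$ is bounded from $\bes$ (resp.\ $\trie$) into $\besseq$ (resp.\ $\trieseq$), and the synthesis map associated with the dual atoms $\{\tilphijl\}$ is bounded in the reverse direction. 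Both statements reduce to anisotropic molecular decomposition criteria of Bownik, because $\psijl$ and $\tilphijl$ satisfy the required size, smoothness, and cancellation conditions by virtue of their Schwartz regularity and of having compact Fourier support away from the origin. The delicate point---and the real technical obstacle---is extracting from Beurling's balayage molecule estimates on $\tilphijl$ that are uniform in both $j$ and $\lambda$, since the geometry of the sampling problem depends subtly on the interplay between the anisotropic dilation $A^j$ and the spreadness constants of $\Lambda$; once this uniformity is in hand, combining the two boundedness statements with the $\LtRd$ reconstruction on a dense subclass (e.g.\ Schwartz functions with Fourier support in a compact set avoiding the origin) yields the announced atomic decomposition for the full scale of anisotropic Besov and Triebel-Lizorkin spaces.
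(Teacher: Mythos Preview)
Your high-level ingredients are the right ones, but the central mechanism is different from the paper's and, as stated, leaves a genuine gap in the construction of the dual atoms.

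In the paper the dual system is \emph{not} obtained by applying Beurling scale by scale to a sampling problem on $V_j$. Instead, one first builds a Frazier--Jawerth--Bownik \emph{lattice} atomic decomposition: choose a dual window $\tau$ with $\supp(\hat\tau)\subseteq\supp(\hat\psi)\subseteq\overline{B}_r(w_0)$ satisfying the Calder\'on condition, so that
\[
f=\sum_{j\in\Zst}\sum_{k\in\Zdst}\ip{f}{\Diltwo{A^j}T_{k/b}\psi}\,\Diltwo{A^j}T_{k/b}\tau
\]
is already a valid atomic decomposition of every $\Essp$. Beurling's theorem is then applied \emph{once}, at scale zero, to the fixed compact $\overline{B}_r(w_0)$: for every $k$ it yields coefficients $a_{\lambda,k}$ with $\abs{a_{\lambda,k}}\lesssim e^{-c\abs{\lambda-k/b}^{1/2}}$ such that $T_{k/b}\psi=\sum_\lambda a_{\lambda,k}T_\lambda\psi$. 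Substituting this into the lattice expansion and regrouping defines the dual atoms explicitly as $\tilpsijl:=\Diltwo{A^j}\bigl(\sum_k a_{\lambda,k}T_{k/b}\tau\bigr)$. The sub-exponential decay of $a_{\lambda,k}$ immediately gives uniform molecular bounds on $\tilpsijl$, and uniformity in $j$ is automatic because the balayage identity is established \emph{before} any dilation is applied---the ``subtle interplay between $A^j$ and $\Lambda$'' that you flag as the main obstacle simply does not arise.

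By contrast, your proposal invokes Beurling ``in its sampling/interpolation formulation'' to directly ``produce a Schwartz dual generator'' satisfying $P_{V_j}f=\sum_\lambda\ip{f}{\psijl}\tilphijl$. Beurling's theorem does not hand you such a dual; it expresses exponentials $e_{-w}$ on a compact as superpositions of $e_{-\lambda}$. Converting that into a dual frame for $V_j$ with controlled Schwartz seminorms would require inverting a frame operator (or similar), and there is no mechanism in your outline guaranteeing the resulting $\tilphijl$ are molecules with bounds uniform in $(j,\lambda)$. The paper circumvents this entirely by passing through the lattice, where explicit duals with explicit decay already exist. A secondary issue is your decomposition $f=\sum_j P_{V_j}f$: the bands $V_j$ overlap, so orthogonal projections do not sum to the identity; the paper handles this via the Calder\'on condition and the dual window $\tau$, not via projections.
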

Alternatively, $\psi$ can be chosen to have compact support and an arbitrary large number of vanishing moments, but in this
case the decomposition is restricted to a subclass of spaces having a bounded degree of smoothness. The meaning of the term
``atomic decomposition''
is carefully discussed below (see Theorems \ref{th_bandlimited} and \ref{th_compact}). It implies that there is an expansion
$f = \sum_{j,\lambda} c_{j,\lambda} \psijl$ valid in all Besov-Triebel-Lizorkin spaces and that the norm of $f$ in those
spaces is equivalent to a suitable norm depending on the absolute value of the coefficients $c_{j, \lambda}$.

In addition to the oversampling results that provide irregular atomic decompositions when $\Lambda$ is sufficiently dense
\cite{fegr89, gihaholawewe02, lisu12}, we mention as related literature the work on wavelets sets (see \cite{dalasp97,wa02-2}
and the references therein), where the existence of windows $\psi$ such that the time-scale system $W(\psi, A, \Lambda)$ is
an orthonormal basis of $\LtRd$ is proved. In that work the Fourier transform of $\psi$ is the characteristic function of a
set $\Omega$ that is related to $\Lambda$ by a tiling condition. This requires $\Lambda$ to have a certain regularity (e.g.
to be a translation of a subgroup of $\Rdst$). When $\Lambda$ is a lattice and the the matrix $A$ is further assumed to
preserve $\Lambda$ - i.e. $A \Lambda \subseteq \Lambda$ - even fast-decaying wavelets are known to exist \cite{bo01-7,
bosp02}.

We stress that the window $\psi$ is not only proved to exist, but it is
obtained following a concrete construction that we call the ``generalized
painless method'' (see Section \ref{sec:cons}). We prove that the
irregular time-scale system thus constructed admits a suitable dual frame.

Besides the theoretical relevance of our result, frames with arbitrary
translation nodes can be useful in applications where data is modeled by
irregular point sources, for example when it is acquired from irregularly
distributed measurements. In order for our result to have full practical
relevance the computability of the dual system should be investigated. We do
not pursue this in this article
(see Section \ref{sec:generalization}). However, the main result has the following consequence of practical interest: if
$\psi$ is constructed following the recipe in Section \ref{sec:cons}, then the membership of a function $f$ to all
anisotropic Besov-Triebel-Lizorkin spaces can be characterized
in terms of the \emph{size} of the numbers $\ip{f}{\psijl}$, which are computable. This means for example that
\emph{asymptotic} information about the smoothness and $p$-integrability of a function $f$ can be extracted from the
size-profile of the numbers $\ip{f}{\psijl}$.

\subsection{Technical overview}
The main technical point of this article is to combine the $\phi$-transform methods of Frazier and Jawerth - and their recent
extension to the anisotropic setting - with Beurling's work on the balayage problem.

The $\phi$-transform theory produces
bandlimited windows that generate time-scale atomic decompositions of the class of Besov-Triebel-Lizorkin spaces using
translates along a lattice. 

The balayage problem consists in deciding, given a compact set $K \subseteq \Rdst$ and a (possibly discrete) set $\Lambda
\subseteq \Rdst$, whether the Fourier transform of every measure coincides on $K$ with the Fourier transform of some measure
supported on $\Lambda$. Beurling's fundamental work gives sufficient conditions for this, relating the size of $K$ to the
density of $\Lambda$.

The main result of this article is obtained by combining both theories in the following way. Given an expansive matrix $A$,
a well-spread set $\Lambda$ and a window constructed following the generalized painless method, we use the $\phi$-transform
theory to produce an atomic decomposition of
Besov-Triebel-Lizorkin spaces using powers of the matrix $A$ as dilations and a lattice $\Gamma$ as set of translation nodes.
The density of $\Gamma$ is chosen so as to roughly match the density of $\Lambda$; this is possible thanks to the flexibility
of the $\phi$-transform method. We then use Beurling's results to
represent every measure supported on $\Gamma$ as a superimposition of delta measures supported on $\Lambda$. This allows us
to replace the lattice translations of the $\phi$-transform expansion by translations along $\Lambda$. Finally, a careful
perturbation argument yields a compactly supported window.

The $\phi$-transform theory provides us with a number of tools to tackle the technical problems. We use the recent extension
of these tools to the anisotropic setting (see Sections \ref{sec:spaces} and \ref{sec:tech}).
\subsection{Organization}
The article is organized as follows. Section \ref{sec:cons} presents the
generalized painless method. This is known to produce wavelet frames for
$L^2(\Rdst)$. Sections \ref{sec:spaces} and \ref{sec:bal} introduce
the relevant classes of functional spaces and collect several technical
tools. In Section \ref{sec:gen} we prove that windows constructed following
the generalized painless method provide atomic decompositions for Besov and
Triebel Lizorkin spaces. As a consequence, these spaces admit an atomic
decomposition produced by arbitrary translations and dilations of a
bandlimited function. In Section \ref{sec:compact} we study the case of 
compactly supported generators. Finally, Section \ref{sec:generalization} discusses
possible generalizations and extensions of the results.

\section{The generalized painless method in $L^2$}
\label{sec:cons}

A matrix $A \in \Rst^{d \times d}$ is called \emph{expansive} if
all its eigenvalues $\mu$ satisfy $\abs{\mu}>1$. For a general
matrix $A \in {\bC}^{d \times d}$, we denote by $A^t$ its
transpose matrix whereas $A^*$ will denote its conjugate transpose.
A set $\Lambda \subseteq \Rdst$ is called \emph{relatively separated} if,
\begin{align*}
\sup_{x \in \Rdst} \#(\Lambda \cap ([-1/2,1/2]^d+\sett{x})) < +\infty.
\end{align*}
That is, a set is relatively separated if the number of points it has on any
cube of side-length 1 is bounded. Equivalently, a set $\Lambda$ is relatively
separated if it can be split into a finite union of sets
$\Lambda^1, \ldots, \Lambda^n$ with each set $\Lambda^i$ being
\emph{separated},
that is,
\begin{align*}
\inf \sett{\abs{\lambda-\lambda'} : \lambda, \lambda' \in \Lambda^i,
\lambda \not= \lambda'} >0.
\end{align*}

The \emph{gap} of a set $\Lambda \subseteq \Rdst$ is defined as,
\begin{align}
\label{eq_gap}
\rho(\Lambda) := \sup_{x\in\Rdst} \inf_{\lambda \in \Lambda} \abs{x-\lambda},
\end{align}
where $\abs{\cdot}$ denotes the Euclidean norm.
A set $\Lambda$ is called \emph{relatively dense}
if $\rho(\Lambda) < + \infty$. Equivalently, $\Lambda$ is relatively dense
if there exists $R>0$ such that,
\begin{align*}
\Rdst = \bigcup_{\lambda \in \Lambda} B_R(\lambda),
\end{align*}
where, in general, $B_r(x)$ denotes the open Euclidean ball with center $x$ and
radius $r$.
For $x \in \Rdst$, the translation operator $T_x$ acts on a function
$f:\Rdst \to \bC$ by,
\begin{align*}
T_xf(y) := f(y-x). 
\end{align*}
Also, for an invertible matrix
$A \in \Rst^{d \times d}$ we let $\Diltwo{A}$ be the
dilation operator normalized in $L^2(\Rdst)$,
\begin{align}
\Diltwo{A}f(x):= \abs{\det{A}}^{-1/2} f(A^{-1}x).
\end{align}

For $w \in \Rdst$, let $e_w(x) := e^{2 \pi i x w}$, where the multiplication
in $xw$ is the dot product. We use the following normalization of the Fourier
transform
of a function $f: \Rdst \to \bC$,
\begin{align*}
\hat{f}(w) := \int_\Rdst f(x) \overline{e_w(x)} dx.
\end{align*}

Let us now present the construction and main result from \cite{alcamo04-1}.

\begin{itemize}
\item Let $A \in \Rst^{d \times d}$ be an expansive matrix.

\item Select a bounded set $V \subseteq \Rdst$ such that $0 \in V^\circ$
and $\partial V$ has null-measure.

\item Select a function $h \in C^\infty(\Rdst)$ such that
\begin{align*}
\inf_{x \in Q} \abs{h(x)} > 0,
\qquad
Q := (A^tV) \setminus V,
\end{align*}
and
\begin{align*}
0 \notin \supp(h) \subseteq \overline{B}_r,
\end{align*}
where $\overline{B}_r$ is a closed Euclidean ball of radius $0<r<+\infty$
(not necessarily centered at the origin).
\item Select a relatively separated set $\Lambda \subseteq \Rdst$
such that,
\begin{align*}
\rho(\Lambda) < \frac{1}{4 r}.
\end{align*}
\item Let $\psi(x) := \int_\Rdst h(w) e^{2 \pi i w x} dw$
be the inverse Fourier transform of $h$.
\end{itemize}

The possible windows $\psi$ generated by this method will be called
\emph{generalized painless wavelets}. The class of all such windows
will be denoted by $\wavclass$. When we refer to the class $\wavclass$
we always assume that $A$ and $\Lambda$ satisfy the conditions above.

\begin{remark}
\label{rem_class_not_empty}
It is easy to verify that given any expansive matrix $A$ and any well-spread set
$\Lambda$, the class $\wavclass$ is not empty. That is, we can use the
construction above to produce a function $\psi \in \wavclass$. Indeed, since
$\Lambda$ is well-spread, $\rho(\Lambda) < +\infty$, so it suffices to let $V$
be a sufficiently small neighborhood of the origin.
\end{remark}

The following result was proven in \cite{alcamo04-1}. For examples of the
construction see \cite{alcamo04-1}.
\begin{theo}
\label{th:l2}
Let $\psi \in \wavclass$. Then the affine system,
\begin{align}
\label{eq_Psi_pain}
\Psi \supereq
\set{\Diltwo{A^j} T_\lambda \psi}
{j \in \Zst, \lambda \in \Lambda},
\end{align}
is a frame of $L^2(\Rdst)$. That is, it
satisfies the following frame inequality
for some constants $0<A \leq B < +\infty$,
\begin{align*}
A \norm{f}_2^2
\leq
\sum_{j \in \Zst}
\sum_{\lambda \in \Lambda}
\abs{\ip{f}{\Diltwo{A^j} T_\lambda \psi}}^2
\leq B \norm{f}_2^2,
\qquad
(f \in L^2(\Rdst)).
\end{align*}
\end{theo}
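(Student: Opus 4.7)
The plan is to transfer the frame sum to the Fourier side scale-by-scale, reducing the two-sided bound to two ingredients: a sampling inequality for bandlimited functions on the irregular set $\Lambda$, and a uniform pointwise control on the series $\sum_{j\in\Zst}\abs{h(A^{tj}w)}^2$. Using $\hat\psi=h$, the dilation identity $\widehat{\Diltwo{A^j}f}(w)=\abs{\det A}^{j/2}\hat f(A^{tj}w)$, Plancherel, and the substitution $u=A^{tj}w$, the inner product can be rewritten as
\begin{align*}
\ip{f}{\Diltwo{A^j}T_\lambda\psi}=\abs{\det A}^{-j/2}G_j(\lambda),
\qquad
G_j(x):=\int_{\Rdst}g_j(u)\,e^{2\pi i x\cdot u}\,du,
\end{align*}
with $g_j(u):=\hat f(A^{-tj}u)\,\overline{h(u)}$. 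Since $\supp h\subseteq\overline{B}_r$, the function $G_j$ is $L^2$-bandlimited with spectrum in $\overline{B}_r$, and Plancherel gives $\norm{G_j}_2=\norm{g_j}_2$.

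The gap condition $\rho(\Lambda)<1/(4r)$ is precisely Beurling's sufficient condition for stable sampling of Paley--Wiener functions on a ball of radius $r$. Combining Beurling's sampling theorem (lower bound) with the Plancherel--Polya estimate for relatively separated sets (upper bound) would yield constants $0<a\le b<\infty$, independent of $j$ and $f$, with
\begin{align*}
a\,\norm{g_j}_2^2\;\le\;\sum_{\lambda\in\Lambda}\abs{G_j(\lambda)}^2\;\le\;b\,\norm{g_j}_2^2.
\end{align*}
Reversing the change of variable gives $\abs{\det A}^{-j}\norm{g_j}_2^2=\int_{\Rdst}\abs{\hat f(w)}^2\abs{h(A^{tj}w)}^2\,dw$; summing over $j\in\Zst$ by Fubini then yields
\begin{align*}
\sum_{j\in\Zst}\sum_{\lambda\in\Lambda}\abs{\ip{f}{\Diltwo{A^j}T_\lambda\psi}}^2\;\asymp\;\int_{\Rdst}\abs{\hat f(w)}^2\,S(w)\,dw,
\qquad S(w):=\sum_{j\in\Zst}\abs{h(A^{tj}w)}^2.
\end{align*}

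It then suffices, by Plancherel applied to $\hat f$, to show that $S$ is bounded above and below on $\Rdst\setminus\sett{0}$. For the lower bound, given $w\ne 0$, expansiveness of $A^t$ implies $A^{tj}w\to 0$ as $j\to-\infty$ and $\abs{A^{tj}w}\to\infty$ as $j\to+\infty$; since $0\in V^\circ$ and $V$ is bounded, there is a largest $j^*$ with $A^{tj^*}w\in V$, and then $A^{t(j^*+1)}w=A^t(A^{tj^*}w)$ lies in $A^tV\setminus V=Q$, forcing $S(w)\ge(\inf_Q\abs{h})^2>0$. For the upper bound, $\supp h$ sits in a shell $\{u:c_1\le\abs{u}\le r\}$ with $c_1>0$, and expansiveness of $A^t$ forces the orbit $\{A^{tj}w\}_{j\in\Zst}$ to meet this shell in at most a bounded number $N$ of integers, independently of $w$, whence $S(w)\le N\norm{h}_\infty^2$.

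I expect the sampling step to be the genuinely delicate ingredient: the upper bound is a routine Plancherel--Polya estimate, but the lower bound is where the irregularity of $\Lambda$ really bites, and it is precisely the density condition $\rho(\Lambda)<1/(4r)$ baked into the class $\wavclass$ that permits the application of Beurling's theorem. Everything else---the Fourier-side computation, the orbit analysis giving the bounds on $S$, and the closing Plancherel step---is a fairly mechanical consequence of expansiveness and the geometric hypotheses on $h$ and $V$.
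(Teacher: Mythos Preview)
The paper does not prove this theorem; it simply records that the result was established in \cite{alcamo04-1} and refers the reader there, so there is no in-paper argument to compare against.

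Your outline is correct and is essentially the argument of \cite{alcamo04-1}: pass to the Fourier side, recognize the inner sum over $\Lambda$ as a sampling problem for the bandlimited functions $G_j$ (spectrum in $\overline{B}_r$), invoke Beurling's density condition $\rho(\Lambda)r<1/4$ for the lower sampling bound and a Plancherel--P\'olya estimate for the upper one, and finish with the two-sided pointwise bound on $S(w)=\sum_j\abs{h(A^{tj}w)}^2$. Two minor points worth flagging. First, the ball $\overline{B}_r$ containing $\supp h$ need not be centered at the origin (the paper says so explicitly), but a modulation reduces to the centered case without changing $\abs{G_j(\lambda)}$, so the sampling constants are unaffected. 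Second, the uniform bound on the number of indices $j$ for which $A^{tj}w$ lies in the support shell follows from the standard two-sided growth estimates for powers of an expansive matrix; you invoke this implicitly and it is indeed routine.
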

\section{Anisotropic Besov-Triebel-Lizorkin spaces}
\label{sec:spaces}
\subsection{Function and sequence spaces}
We now introduce the class of anisotropic Besov-Triebel-Lizorkin spaces and
recall some basic facts about them \cite{lotr79, sctr73, gata02, tr04-2, bo05-1,
boho06, bo07b, bo08-3}). For a comprehensive discussion of the literature on
anisotropic function spaces see \cite{tr04-2}.
We will mainly follow the approach in \cite{bo05-1, boho06, bo07b,
bo08-3} that considers general expansive dilations and generalizes to the
anisotropic setting some of the fundamental results of Frazier and Jawerth
\cite{frja85, frja90} (see also \cite{frjawe91, gihaholawewe02}).
Together with each functional space we introduce a corresponding
sequence space that will measure the size of the coefficients in atomic
decompositions.

Let $A \in \Rst^{d \times d}$ be an expansive matrix.
Let $\varphi \in \Schwartz(\Rdst)$ be such that
\begin{align*}
&\supp(\hat{\varphi}) \subseteq ( [-1/2,1/2]^d \setminus \sett{0} ),
\\
&\sup_{j \in \Zst} \abs{\hat{\varphi}((A^*)^jw)} >0,
\quad \mbox{for all $w \in \Rdst \setminus \sett{0}$}.
\end{align*}
Let $\mu$ be a measure on $\Rdst$ that is doubling with respect to the
seminorm induced by $A$ (see \cite{bo05-1}, in
particular the Lebesgue measure is adequate \cite[Remark 2.1]{bo05-1}).
We are mainly interested in the case of the Lebesgue measure, but the theory is
available for more general measures. The reader interested in this level of
generality should keep in mind that all the estimates throughout the article
depend on the choice of the measure $\mu$. When we want to emphasize
that we let $\mu$ be the Lebesgue measure we say we are in the unweighted case.

For $\alpha \in \Rst$ and $0<p,q \leq \infty$, the homogeneous, weighted,
anisotropic Besov space $\bes = \bes(\Rdst, A, \mu)$ is defined as the
collection of all distributions modulo polynomials $f \in \Schwartz' /
\mathcal{P}$ such that,
\begin{align}
\label{deffbes}
\norm{f}_\bes :=
\left(
\sum_{j \in \Zst}
\abs{\det{A}}^{-j (\alpha+1/2) q} \norm{f*\Diltwo{A^j}\varphi}_{L^p(\mu)}^q
\right)^{1/q}
< +\infty,
\end{align}
with the usual modifications when $q=\infty$. In \cite{bo05-1}
it is proved that $\bes$ is a quasi-Banach space (Banach for $p,q \geq 1$)
and that it is independent of the particular choice of $\varphi$ in the sense
that different choices yield the same space with equivalent norms.

The sequence space $\besseq(\Zdst)=\besseq(\Zdst,A,\mu)$ consists of all
the sequences $a \in {\bC}^{\Zst \times \Zdst}$ such that,
\begin{align}
\label{deffbesse}
\norm{a}_\besseq :=
\left(
\sum_{j\in \Zst}
\abs{\det{A}}^{-j (\alpha+1/2) q}
\Bignorm{
\sum_{k \in \Zdst}
\abs{a_{j,k}} \chi_{(A^j([0,1]^d+k))}}_{L^p(\mu)}^q
\right)^{1/q}
< +\infty,
\end{align}
with the usual modifications when $q=\infty$.

For $\alpha \in \Rst$, $0<q \leq +\infty$ and $0<p<+\infty$,
the anisotropic homogeneous Triebel-Lizorkin space
$\trie=\trie(\Rdst, A, \mu)$ is defined similarly, this time using the norm,
\begin{align}
\label{defftrie}
\norm{f}_\trie :=
\wholenorm{
\left(
\sum_{j \in \Zst} \abs{\det{A}}^{-j (\alpha +1/2) q}
\abs{f*\Diltwo{A^j}\varphi}^q
\right)^{1/q}
}_{L^p(\mu)},
\end{align}
with the usual modifications when $q=+\infty$.
The norm of the associated sequence space on $\Zst \times \Zdst$ is given
by,
\begin{align}
\label{defftrieseq}
\norm{a}_\trieseq :=
\wholenorm{
\left(
\sum_{j\in\Zst}
\abs{\det{A}}^{-j (\alpha+1/2) q}
\sum_{k \in \Zdst}
\abs{a_{j k}}^q \chi_{(A^j([0,1]^d+k))}
\right)^{1/q}
}_{L^p(\mu)}.
\end{align}
The definitions of the spaces $\trie$ and $\trieseq$ for $p=+\infty$ are
more technical. For $\alpha \in \Rst$, $0<q \leq +\infty$,
$\dot{F}^{\alpha,q}_{+\infty}$ is defined using the norm,
\begin{align}
\label{defftrieqinf}
\norm{f}_{\dot{F}^{\alpha,q}_{+\infty}} :=
\sup_{s \in \Zst, k \in \Zdst}
\left(
\frac{1}{\mu(Q_{s,k})}
\int_{Q_{s,k}}
\sum_{j=-\infty}^s \abs{\det{A}}^{-j (\alpha +1/2)q}
\abs{f*\Diltwo{A^j}\varphi(x)}^q d\mu(x)
\right)^{1/q},
\end{align}
where $Q_{j,k} := A^j([0,1]^d+k)$.
When $q=+\infty$ this should be interpreted as,
\begin{align}
\label{defftrieinfinf}
\norm{f}_{\dot{F}^{\alpha,+\infty}_{+\infty}} :=
\sup_{j \in \Zst}
\abs{\det{A}}^{-j (\alpha+1/2)}
\norm{f*\Diltwo{A^j}\varphi}_{L^\infty(\mu)}.
\end{align}
The corresponding space $\dot{f}^{\alpha,q}_{+\infty}(\Zdst)$
of sequences on $\Zst \times \Zdst$ is defined using the norm,
\begin{align}
\label{defftrieqseqinf}
\norm{a}_{\dot{f}^{\alpha,q}_{+\infty}} :=
\sup_{s \in \Zst, k \in \Zdst}
\left(
\frac{1}{\mu(Q_{s,k})}
\int_{Q_{s,k}}
\sum_{j=-\infty}^s \abs{\det{A}}^{-j (\alpha+1/2) q}
\abs{a_{j,k}}^q\chi_{Q_{j,k}}(x)^q d\mu(x)
\right)^{1/q}.
\end{align}
When $q=+\infty$ this should be interpreted as,
\begin{align}
\label{defftrieqseinfinf}
\norm{a}_{\dot{f}^{\alpha,+\infty}_{+\infty}} :=
\sup_{j \in \Zst, k \in \Zdst}
\abs{\det{A}}^{-j (\alpha+1/2)}
\abs{a_{j,k}}.
\end{align}
For a discussion about these definitions see \cite{boho06,bo07b, bo08-3}.
The inhomogeneous variants of all these spaces are defined similarly
(see \cite{bo05-1, boho06, bo07b}), but for simplicity we will only treat
the homogeneous case.

The class of spaces introduced above will be generically called the family
of \emph{anisotropic spaces of Besov-Triebel-Lizorkin type} associated with a
certain dilation $A$ and a measure $\mu$. A member of that family will be
denoted by $\Essp$ while its corresponding sequence space will be denoted by
$\essp$. Each of the spaces $\Essp$ is continuously embedded into $\Schwartz'/
\mathcal{P}$ and is a quasi-Banach space (see \cite{bo05-1, boho06, bo07b}).

\subsection{Sequence spaces on more general index sets}
\label{sec:seq_lambda}
Let $\Lambda \subseteq \Rdst$ be a relatively separated set. 
In order to measure the size of the coefficients associated with the wavelet
system in Section \ref{sec:cons}, we now define a space of sequences indexed
$\Zst \times \Lambda$.

Let $n := \max_{k \in \Zdst} \#(\Lambda \cap ([0,1)^d+\sett{k}))$. Since
each of the cubes $[0,1)^d+\sett{k}$ contains at most $n$ points of
$\Lambda$, it follows that we can split $\Lambda$ into 
a disjoint union of $n$ subsets,
\begin{align*}
\Lambda = \Lambda^1 \cup \ldots \cup \Lambda^n,
\end{align*}
where each subset is parametrized by a set $I^s \subseteq \Zdst$,
\begin{align*}
\Lambda^s = \set{\lambda^s_k}{k \in I^s},
\qquad (1 \leq s \leq n), 
\end{align*}
and satisfies,
\begin{align}
\label{eq_split_control}
\abs{\lambda^s_k - k}_\infty \leq 1,
\qquad (1 \leq s \leq n, k \in I^s).
\end{align}
Given a sequence of complex numbers
$c = \set{c_{j,\lambda}}{j \in \Zst, \lambda \in \Lambda}$,
we define auxiliary sequences $c^1, \ldots c^s$
with indexes on $\Zst \times \Zdst$ by,
\begin{align*}
c^s_{j,k} :=
\left\{
\begin{array}{ll}
c_{j,\lambda_k}, & \mbox{if $k \in I^s$,} \\
0, & \mbox{if $k \notin I^s$.}
\end{array}
\right.
\end{align*}
We then define the space $\essp(\Lambda)$ as the set of all sequences
$c \in {\bC}^{\Zst \times \Lambda}$ such that the corresponding auxiliary
sequences
$c^1, \ldots, c^s$ belong to $\essp(\Zdst)$.
We endow the space $\essp(\Lambda)$ with the norm,
\begin{align*}
\norm{c}_{\essp(\Lambda)}
= \norm{c^1}_{\essp(\Zdst)} + \ldots + \norm{c^s}_{\essp(\Zdst)}.
\end{align*}
When the underlying set $\Lambda$ is clear from the context we will
write $\essp$ instead of $\essp(\Lambda)$.
Note that the definition depends
on a specific decomposition of the set $\Lambda$. This is a bit unsatisfactory
but it will be sufficient for
the purpose of this article since we will keep the set $\Lambda$
fixed. The question of the naturality of the definition is rather involved
and beyond the scope of this article. We refer the reader to
\cite{fegr85, fe87} for some results in this direction
- see also \cite[Lemma 3.5]{fegr89}.
\subsection{Smooth atomic decomposition}
Let $\psi,\varpsi \in \Schwartz$ be such that,
\begin{align*}
\supp(\hat{\psi}),\supp(\hat{\varpsi}) \subseteq [-1/2,1/2]^d \setminus
\sett{0}.
\end{align*}
Assume also that $\psi,\varpsi$ satisfy the \emph{Calder\'on condition},
\begin{align*}
\sum_{j \in \Zst} \hat{\psi}((A^*)^jw)\overline{\hat{\varpsi}((A^*)^jw)} = 1,
\quad \mbox{for all $w \in \Rdst \setminus \sett{0}$}.
\end{align*}
The work of Bownik and Ho \cite{bo05-1, boho06, bo07b, bo08-3}, which extends to the anisotropic case the
fundamental results of Frazier and Jawerth \cite{frja85, frja90}, implies that the windows $\psi, \tau$ provide the following
atomic decomposition of Besov and Triebel-Lizorkin spaces.
\begin{theo}
\label{th_at_desc_bow}
Let $0 < p,q \leq +\infty, \alpha \in \Rst$, let
$\Essp$ be an anisotropic space of Besov-Triebel-Lizorkin type and let
$\essp$ be the corresponding sequence space.
Then the \emph{analysis} and \emph{synthesis} operators,
\begin{align*}
&\coefpsi: \Essp \to \essp(\Zdst),
\qquad
\coefpsi(f) = \left( \ip{f}{\Diltwo{A^j}T_{k} \psi} \right)_{j,k},
\\
&\recvarpsi: \essp(\Zdst) \to \Essp,
\qquad
\recvarpsi(c) = \sum_j \sum_k c_{j,k} \Diltwo{A^ j}T_{k} \varpsi,
\end{align*}
are bounded. Moreover, $\recvarpsi \circ \coefpsi$ is the identity on $\Essp$.

Hence, each $f \in \Essp$ admits the expansion,
\begin{align*}
f = \sum_{j \in \Zst, k \in \Zdst} \ip{f}{\Diltwo{A^ j}T_{k}
\psi}\Diltwo{A^j}T_{k}\varpsi.
\end{align*}
Convergence takes place in the $\Schwartz'/\mathcal{P}$ topology and,
for $p,q < +\infty$, also in the norm of $\Essp$. (See \cite[Lemmas 2.6
and 2.8]{boho06} for a discussion about the precise meaning of the
$\Schwartz'/\mathcal{P}$ convergence.)
\end{theo}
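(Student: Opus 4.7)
The plan is to derive this statement as a direct transcription of the anisotropic $\phi$-transform theorem of Bownik--Ho. The argument splits into three independent pieces: boundedness of the analysis map $\coefpsi$, boundedness of the synthesis map $\recvarpsi$, and the reproducing identity $\recvarpsi \circ \coefpsi = \mathrm{id}$. The first two are symmetric in nature; the third is where the Calder\'on condition enters.

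For boundedness of $\coefpsi$ I would start from the identity
\begin{align*}
\ip{f}{\Diltwo{A^j}T_k\psi} = \abs{\det A}^{j/2}\, (f*\Diltwo{A^j}\tilpsi)(A^jk),
\end{align*}
where $\tilpsi(x):=\overline{\psi(-x)}$ is a Schwartz function whose Fourier transform lies in $[-1/2,1/2]^d \setminus \sett{0}$. Since $\tilpsi$ thus satisfies the same structural hypotheses as the Littlewood--Paley window $\varphi$ used in the definitions (\ref{deffbes})--(\ref{defftrieqseinfinf}), one invokes the Peetre-type maximal-function estimates proved in \cite{bo05-1, boho06, bo07b, bo08-3} to compare the $\essp$-norm of the discrete samples $\{(f*\Diltwo{A^j}\tilpsi)(A^jk)\}_{j,k}$ with the continuous norm $\|f\|_\Essp$; the indicator-function structure of $\essp$ is essentially designed to make this comparison tight. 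Boundedness of $\recvarpsi$ is the dual statement: each $\Diltwo{A^j}T_k\varpsi$ is a smooth molecule associated with the anisotropic cube $Q_{j,k}$, so summing the $\Essp$-molecular estimates against a sequence in $\essp$ and invoking the anisotropic Frazier--Jawerth-type almost-orthogonality bounds from \cite{boho06, bo07b, bo08-3} yields $\|\recvarpsi c\|_\Essp \lesssim \|c\|_\essp$.

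For the reproducing identity I would use the Calder\'on condition to write, at the continuous level, $f = \sum_{j\in\Zst} f*\Diltwo{A^j}\tilpsi*\Diltwo{A^j}\varpsi$ in $\Schwartz'/\mathcal{P}$. The key observation is that at each scale $j$, the Fourier transform of $f*\Diltwo{A^j}\tilpsi$ is supported in $(A^*)^{j}[-1/2,1/2]^d$, so after dilating back by $A^{-j}$ the function is band-limited to the unit cube; Fourier series expansion in the orthonormal basis $\{e_k\}_{k\in\Zdst}$ on $[-1/2,1/2]^d$ replaces the convolution $f*\Diltwo{A^j}\tilpsi*\Diltwo{A^j}\varpsi$ at each scale by the discrete sum $\sum_{k\in\Zdst}\ip{f}{\Diltwo{A^j}T_k\psi}\Diltwo{A^j}T_k\varpsi$. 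Summing over $j$ delivers the desired expansion.

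The main obstacle I anticipate is the case $p=+\infty$ in the Triebel--Lizorkin scale, where the definitions (\ref{defftrieqinf})--(\ref{defftrieqseinfinf}) involve a Carleson-type supremum over cubes and the usual density argument (based on $\Schwartz\cap\Essp$) fails. There both the continuous Calder\'on reproducing formula and the pointwise discretization have to be validated in the $\Schwartz'/\mathcal{P}$-topology directly, and the boundedness of $\recvarpsi$ requires the finer tent-space estimates of \cite{bo08-3}; these are the only technical statements that cannot be absorbed into an obvious routine, and I would import them verbatim. Apart from this technicality, convergence in $\Essp$-norm when $p,q<\infty$ follows from the density in $\Essp$ of finitely supported sequences in $\essp$ together with the already-proved boundedness of $\recvarpsi$ and $\coefpsi$.
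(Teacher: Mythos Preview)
The paper does not actually prove this theorem: it is stated as a quotation of known results, introduced with the sentence ``The work of Bownik and Ho \cite{bo05-1, boho06, bo07b, bo08-3}, which extends to the anisotropic case the fundamental results of Frazier and Jawerth \cite{frja85, frja90}, implies that the windows $\psi, \tau$ provide the following atomic decomposition\ldots''. There is no argument in the paper beyond this citation.

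Your proposal is therefore not to be compared against a proof in the paper but against the proofs in the cited references, and in that respect it is an accurate high-level reconstruction of the Frazier--Jawerth/Bownik--Ho strategy: Peetre-type maximal inequalities for the analysis bound, smooth-molecule almost-orthogonality for the synthesis bound, and the Calder\'on identity plus band-limited sampling (Fourier series on the unit cube) for the reproducing formula. Your identification of the $p=+\infty$ Triebel--Lizorkin case as the one place requiring genuine additional input (the tent-space/Carleson estimates of \cite{bo08-3}) is also correct. In short, your sketch matches what the cited papers do; the present paper simply imports the result wholesale.
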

\begin{remark}
As a consequence of the atomic decomposition, the following norm equivalence
holds,
\begin{align*}
\norm{f}_\Essp \approx \bignorm{(\ip{f}{\Diltwo{A^ j}T_{k}\psi})_{j,k}}_\essp,
\qquad (f \in \Essp).
\end{align*}
\end{remark}

\subsection{Molecules and general decompositions}
\label{sec:mol}
\begin{deff}
\label{def_molecules}
Given $L>0$, integers $M,N>0$ and a relatively separated set $\Lambda
\subseteq \Rdst$,
a family of functions,
\begin{align*}
\Psi \supereq
\set{\psi_{j,\lambda}: \Rdst \to \bC}
{j \in \Zst, \lambda \in \Lambda}, 
\end{align*}
is called a set of $(L,M,N)$-molecules if
each function has continuous derivatives up to order $M$ and satisfies,
\begin{align*}
&\abs{\partial^\beta(\Diltwo{A^{-j}} \psi_{j,\lambda})(x)} \leq
(1+\abs{x-\lambda})^{-L},
\quad
\mbox{for all $\abs{\beta} \leq M$},
\\
&\int_\Rdst x^\beta \psi_{j,\lambda}(x) dx = 0,
\quad
\mbox{for all $\abs{\beta} \leq N$}.
\end{align*}
\end{deff}
\begin{remark}
We stress that the functions in the definition of family of molecules do not
need to be dilated and translated versions of a single function. Throughout
the article, it should not be assumed, unless explicitly stated, that a function
denoted by $\psi_{j,\lambda}$ is equal to $\Diltwo{A^j}T_\lambda \psi$.
\end{remark}

The definition presented here consists of a simplified version of the
anisotropic molecules introduced in \cite{bo05-1,boho06,bo07b}. This will be
sufficient for our purpose. The notion of set of molecules generalizes the one
of set of atoms in the sense that the analysis and synthesis maps
(see Theorem \ref{th_at_desc_bow}) can be extended to these families.
The most important technical point is the justification of the meaningfulness of
the
quantity $\ip{f}{\psi_{j,\lambda}}$ for a molecule
$\psi_{j,\lambda}$ and $f \in \Essp$, since $\psi_{j,\lambda}$ may neither be
in the Schwartz class nor have all its moments vanishing. This point is
thoroughly discussed in \cite{bo05-1,boho06,bo07b}; we refer the reader to these
articles.

The following theorem, which is a minor modification of the corresponding results in the above mentioned articles,
establishes the boundedness of analysis and synthesis for a general molecular system, which is not necessarily a wavelet
system and is indexed by a general relatively separated set $\Lambda$.

\begin{theo}
\label{th_bessel_bounds}
Let $\Essp$ be an anisotropic space of Besov-Triebel-Lizorkin type
$(0 < p,q \leq +\infty, \alpha \in \Rst)$,
let $\Lambda \subseteq \Rdst$ be a relatively separated set and let
$\essp(\Lambda)$ be the corresponding sequence space. Then, there
exist
constants $L, M, N$ and $C$ that only depend on $A$, $p, q,
\alpha$ and $\Lambda$ such that for every set of $(L,M,N)$-molecules,
\begin{align*}
\Psi \supereq
\set{\psi_{j,\lambda}: \Rdst \to \bC}
{j \in \Zst, \lambda \in \Lambda}, 
\end{align*}
the following estimates hold.
\begin{align*}
&\Bignorm{\sum_{j,\lambda} c_{j,\lambda} \psi_{j,\lambda}}_\Essp \leq C
\norm{c}_{\essp(\Lambda)},
\quad
\mbox{for all $c \in \essp(\Lambda)$},
\\
&\bignorm{(\ip{f}{\psi_{j,\lambda}})_{j,\lambda}}_\essp \leq C \norm{f}_\Essp,
\quad
\mbox{for all $f \in \Essp$}.
\end{align*}
Hence, the analysis and synthesis maps,
\begin{align}
\label{eq_operators}
&\coefPsi: \Essp \to \essp(\Lambda),
\qquad
\coefPsi(f) = \left( \ip{f}{\psijl} \right)_{j,\lambda},
\\
&\recPsi: \essp(\Lambda) \to \Essp,
\qquad
\recPsi(c) = \sum_j \sum_\lambda c_{j,\lambda} \psijl,
\end{align}
are bounded.
\end{theo}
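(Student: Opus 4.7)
The plan is to reduce the statement to the analogous molecular decomposition results for systems indexed by $\Zst \times \Zdst$, which are the minor variant of Theorem \ref{th_at_desc_bow} established in the Bownik--Ho papers \cite{bo05-1, boho06, bo07b, bo08-3} cited in the paragraph preceding the theorem. The only extra ingredient needed is the splitting of $\Lambda$ introduced in Section \ref{sec:seq_lambda}, which was designed precisely for this purpose.

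First, I would invoke the decomposition $\Lambda = \Lambda^1 \cup \cdots \cup \Lambda^n$ together with the index sets $I^s \subseteq \Zdst$ and enumerations $\Lambda^s = \set{\lambda^s_k}{k \in I^s}$ satisfying $\abs{\lambda^s_k - k}_\infty \leq 1$. For each $1 \leq s \leq n$, define the reindexed family by setting $\tilpsijl^s_k := \psi_{j,\lambda^s_k}$ for $k \in I^s$ and $\tilpsijl^s_k := 0$ otherwise. Then the full sum splits as
\begin{align*}
\sum_{j \in \Zst, \lambda \in \Lambda} c_{j,\lambda}\, \psijl
\;=\; \sum_{s=1}^n \sum_{j \in \Zst, k \in \Zdst} c^s_{j,k}\, \tilpsijl^s_k,
\end{align*}
where $c^s$ are the auxiliary sequences from Section \ref{sec:seq_lambda}. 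Similarly, the $\Lambda$-indexed coefficient sequence $(\ip{f}{\psijl})_{j,\lambda}$ corresponds, via the same reindexing, to $n$ sequences $(\ip{f}{\tilpsijl^s_k})_{j,k}$ on $\Zst \times \Zdst$.

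Second, I would verify that each reindexed family $\{\tilpsijl^s_k\}$ is a set of molecules suitable for the $\Zdst$-indexed Bownik--Ho estimates. The vanishing moment condition is inherited verbatim. For the decay estimate, the bound $\abs{\lambda^s_k - k}_\infty \leq 1$ yields the universal inequality
\begin{align*}
1 + \abs{x - k} \leq (1 + \sqrt{d})\,(1 + \abs{x - \lambda^s_k}),
\end{align*}
so that
\begin{align*}
\abs{\partial^\beta(\Diltwo{A^{-j}} \tilpsijl^s_k)(x)}
\leq (1+\sqrt{d})^{L}\,(1+\abs{x-k})^{-L}
\end{align*}
for all $\abs{\beta} \leq M$. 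The multiplicative constant is absorbed by enlarging $L$ slightly in the hypothesis. Applying the $\Zdst$-indexed molecular analysis and synthesis bounds to each $s$ produces
\begin{align*}
\Bignorm{\sum_{j,k} c^s_{j,k}\, \tilpsijl^s_k}_{\Essp}
\leq C\, \norm{c^s}_{\essp(\Zdst)},
\qquad
\bignorm{(\ip{f}{\tilpsijl^s_k})_{j,k}}_{\essp(\Zdst)}
\leq C\, \norm{f}_{\Essp},
\end{align*}
and summing over $s = 1, \ldots, n$ together with the definition $\norm{c}_{\essp(\Lambda)} = \sum_s \norm{c^s}_{\essp(\Zdst)}$ gives both estimates claimed in the theorem. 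The resulting constant depends on $\Lambda$ only through the splitting number $n$.

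The main obstacle, in my view, is verifying that Definition \ref{def_molecules} — which is a simplified, Euclidean-norm version of the anisotropic molecules of Bownik--Ho (whose decay is measured in the quasi-norm $\rho_A$ associated with $A$) — is strong enough to invoke the $\Zdst$-indexed synthesis/analysis bounds directly. This boils down to exploiting the polynomial comparability between $\abs{\,\cdot\,}$ and $\rho_A(\,\cdot\,)$, which forces the parameters $L$, $M$, $N$ in the hypothesis to be taken substantially larger than what the $\rho_A$-based estimates strictly require. Once this conversion is set up (it depends only on $A$, and on $p,q,\alpha$ through the Bownik--Ho thresholds), the rest of the argument is essentially bookkeeping enabled by the fixed splitting of $\Lambda$.
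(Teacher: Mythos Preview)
Your proposal is correct and follows essentially the same route as the paper: reduce to the $\Zdst$-indexed molecular estimates of Bownik--Ho via the splitting of $\Lambda$ from Section \ref{sec:seq_lambda}, using $\abs{\lambda^s_k-k}_\infty\leq 1$ to transfer the decay center, and sum over $s$. The paper handles the Euclidean-versus-$\quasiA$ conversion exactly as you anticipate, via the bound $\quasiA(x)\lesssim\abs{x}^t$ from \cite[Lemma 3.2]{bo03-1}, so that choosing $L\geq tL'$ makes Definition \ref{def_molecules} imply the Bownik--Ho decay condition.
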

\begin{remark}
\label{rem_conv_rec}
The series defining $\recPsi(c)$ converge in the
$\Schwartz'/\mathcal{P}$ topology and,
for $p,q < +\infty$, also in the norm of $\Essp$.
\end{remark}
\begin{proof}[Proof of Theorem \ref{th_bessel_bounds}]
Let us consider first the case $\Lambda=\Zdst$. In this case the result follows
from the results in Section 5 of \cite{bo05-1, boho06, bo07b}. The definition of
molecule given there requires the decay conditions,
\begin{align}
\label{eq_decay_quasi}
&\abs{\partial^\beta(\Diltwo{A^{-j}} \psi_{j,\lambda})(x)} \leq
(1+\quasiA(x-\lambda))^{-L'},
\quad
\mbox{for all $\abs{\beta} \leq M$},
\end{align}
with respect to a certain quasi-norm $\quasiA$ associated with $A$
and a certain constant $L'$. This quasi-norm $\quasiA$ satisfies,
\begin{align*}
\quasiA(x) \lesssim \abs{x}^t, 
\end{align*}
for $\quasiA(x) \geq 1$ and some number $t>0$ that depends on $A$
(see \cite[Lemma 3.2]{bo03-1} or \cite[Lemma 2.2]{boho06}). It follows
that
\begin{align*}
(1+\abs{x-\lambda})^{-tL'}
\lesssim 
(1+\quasiA(x-\lambda))^{-L'}. 
\end{align*}
Hence, for any $L' \geq 0$, the decay condition prescribed by Equation
\eqref{eq_decay_quasi} is satisfied if we take $L \geq t L'$.

We now show how to reduce the case of a general set $\Lambda$ to the one of
$\Lambda=\Zdst$. With the notation from Section \ref{sec:seq_lambda},
Equation \eqref{eq_split_control} implies that for all $1 \leq s \leq n$,
\begin{align*}
\abs{\partial^\beta(\Diltwo{A^{-j}} \psi_{j,\lambda^s_k})(x)}
&\leq (1+\abs{x-\lambda^s_k})^{-L}
\\
&\lesssim 2^L (1+\abs{x-k})^{-L}.
\end{align*}
Hence, if we define the families
$\Psi^s \supereq \set{\psi^s_{j,k}}{j \in \Zst, k \in \Zdst}$ by
$\psi^s_{j,k} = \psi_{j,\lambda^s_k}$ if $k \in I^s$ and 0 otherwise,
it follows that each set $\Psi^s$ is a constant multiple of a family of
molecules. Therefore, the corresponding analysis and
synthesis maps are bounded. Hence,
\begin{align*}
\Bignorm{\sum_{j \in \Zst, \lambda \in \Lambda} c_{j,\lambda}
\psi_{j,\lambda}}_\Essp
&\lesssim
\sum_{s=1}^n
\Bignorm{\sum_{j \in \Zst, k \in \Zdst} c^s_{j,k}
\psi^s_{j,k}}_\Essp
\\
&\lesssim
\sum_{s=1}^n \bignorm{c^s}_{\essp(\Zdst)} = \bignorm{c}_{\essp(\Lambda)}.
\end{align*}
Similarly,
\begin{align*}
\bignorm{(\ip{f}{\psijl})_{j \in \Zst, \lambda \in \Lambda}}_{\essp(\Lambda)}
&=
\sum_{s=1}^n
\bignorm{\left(\ip{f}{\psi^s_{j, k}}\right)_
{j \in \Zst, k \in \Zdst}}_{\essp(\Zdst)}
\\
&\lesssim \norm{f}_\Essp.
\end{align*}
\end{proof}

\section{Balayage of Dirac measures}
\label{sec:bal}
The balayage problem for the ball, consists in deciding if the restriction to the Euclidean ball of the Fourier transform of
an arbitrary measure can be represented as a linear combination of imaginary exponentials along a given, quite arbitrary,
discrete set. In \cite{be66-3}, Beurling proved that this is indeed possible if the gap of the discrete set is small
enough (cf. Eq. \eqref{eq_gap}). In that same article, he proved that if the measure to be represented is very concentrated,
then the coefficients in the corresponding non-harmonic trigonometric expansion of the restriction of its Fourier transform
have fast decay. This is essential to the present article.

We now quote a simplified version of one of the results in \cite{be66-3}.
\begin{theo}
\label{theo_beurling}
Let $r>0$ and let $\Lambda \subseteq \Rdst$ be such that
$\rho(\Lambda) r < 1/4$. Then, there exist constants $C>0$, $0<\consexp<1$,
depending only on $d, \Lambda$ and $r$ such that for every $w \in \Rdst$, there
exists a sequence $\set{a_\lambda(w)}{\lambda \in \Lambda} \subseteq \bC$ such
that
\begin{align*}
&e_w(x) = \sum_{\lambda \in \Lambda} a_\lambda(w) e_\lambda(x),
\quad
\mbox{for all $x \in \overline{B}_r(0)$},
\\
&\abs{a_\lambda(w)} \leq C e^{-\consexp \abs{w-\lambda}^{1/2}}.
\end{align*}
\end{theo}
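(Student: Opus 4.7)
The plan is to derive this statement from Beurling's balayage theorem for the Euclidean ball applied to the Dirac mass $\delta_w$. Beurling's theorem asserts that whenever $\rho(\Lambda) r < 1/4$, every bounded complex measure $\mu$ on $\Rdst$ admits a \emph{balayage} onto $\Lambda$: a discrete measure $\nu = \sum_\lambda c_\lambda \delta_\lambda$ supported on $\Lambda$ whose Fourier-Stieltjes transform agrees with $\hat\mu$ on $\overline{B}_r(0)$. Applying this with $\mu = \delta_w$ yields coefficients $c_\lambda(w)$ with
\[
\sum_{\lambda \in \Lambda} c_\lambda(w) e^{-2\pi i \lambda \xi} \;=\; e^{-2\pi i w \xi},
\qquad \xi \in \overline{B}_r(0).
\]
Replacing $\xi$ by $-\xi$ (or taking complex conjugates and setting $a_\lambda(w) := \overline{c_\lambda(w)}$) gives exactly the expansion $e_w(x) = \sum_\lambda a_\lambda(w) e_\lambda(x)$ on $\overline{B}_r(0)$ required by the first conclusion.

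The real content, and the main obstacle, is obtaining the quantitative bound $|a_\lambda(w)| \leq C e^{-\consexp |w-\lambda|^{1/2}}$. The plan here is to reproduce (or invoke) the quantitative form of Beurling's construction. The idea is to fix a band-limited auxiliary ``master function'' $h$ whose Fourier transform is smooth, equals $1$ on $\overline{B}_r(0)$, and is supported in a slightly larger ball; the balayage operator is then built as an iterative Neumann-type series in which one repeatedly replaces the point mass $\delta_w$ by a combination of point masses on $\Lambda$, using $h$ to control the spatial spread. The geometric condition $\rho(\Lambda) r < 1/4$ is exactly what makes each iterative step strictly contracting. The coefficients produced at the $n$-th step are localized on a neighborhood of $w$ whose radius grows like a power of $n$, while their size decays like a fixed geometric factor $\theta^n$. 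Optimizing in $n$ for a given $\lambda$ balances these two effects and produces the sub-exponential $e^{-\consexp |w-\lambda|^{1/2}}$ rate; the square-root exponent is the characteristic trade-off between the polynomial spatial growth of the $n$-th iterate and the exponential decay in $n$.

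The first step (passing from balayage of $\delta_w$ to the non-harmonic expansion of $e_w$) is purely formal Fourier duality. The bulk of the work lies in constructing the master function $h$ and tracking the spatial localization of the iterated balayage; both are carried out in detail in \cite{be66-3}. Since the present theorem is precisely a special case of Beurling's quantitative result restricted to Dirac source measures on a ball, I would invoke \cite{be66-3} for the estimate and give only the short conjugation argument above.
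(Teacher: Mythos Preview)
Your proposal is correct and takes essentially the same approach as the paper: both simply invoke Beurling's quantitative balayage estimates from \cite{be66-3} rather than reproving them, with the non-harmonic expansion of $e_w$ obtained by Fourier duality from the balayage of $\delta_w$. The paper is slightly more specific---it points to Equations (15), (24), (25) of \cite{be66-3} with the choice $\Omega(w)=\consexp|w|^{1/2}$ and notes that the general $r$ follows by rescaling the case $r=1$---while you supply more heuristic explanation of the iterative mechanism; but the substance is the same.
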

Theorem \ref{theo_beurling} follows by applying the estimates obtained in
Equation (24) and (25) from \cite{be66-3} to the function
$\Omega(w):=\consexp \abs{w}^{1/2}$ and a suitably small constant $c>0$
(cf. Equation (15) in \cite{be66-3}). The result in \cite{be66-3}
applies to $r=1$. Theorem \ref{theo_beurling} follows after rescaling.

For convenience, we give the following straightforward variation of Theorem
\ref{theo_beurling}.
\begin{coro}
\label{coro_beurling}
Let $r>0$, let $\Lambda \subseteq \Rdst$ be such that
$\rho(\Lambda) r < 1/4$ and $x_0 \in \Rdst$. Then, there exist constants $C>0$,
$0<\consexp<1$, depending only on $d, \Lambda$ and $r$ such that for every $w
\in \Rdst$, there exists a sequence $\set{a_\lambda(w)}{\lambda \in \Lambda}
\subseteq \bC$ such that
\begin{align*}
&e_{-w}(x) = \sum_{\lambda \in \Lambda} a_\lambda(w) e_{-\lambda}(x),
\quad
\mbox{for all $x \in \overline{B}_r(x_0)$},
\\
&\abs{a_\lambda(w)} \leq C e^{-\consexp \abs{w-\lambda}^{1/2}}.
\end{align*}
\end{coro}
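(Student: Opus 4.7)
The plan is to deduce the corollary directly from Theorem \ref{theo_beurling} by exploiting two elementary symmetries of the statement: a translation of the evaluation variable by $x_0$, and a complex conjugation that converts an expansion in $\e{wx}$ into one in $\ee{wx}$. Neither operation affects the modulus of the coefficients, so the decay bound transfers without loss.

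Concretely, I would proceed as follows. Fix $w \in \Rdst$ and $x_0 \in \Rdst$, and apply Theorem \ref{theo_beurling} to the frequency $w$ to obtain a sequence $\sett{b_\lambda(w)}_{\lambda \in \Lambda}$ with
\begin{align*}
\e{wy} = \sum_{\lambda \in \Lambda} b_\lambda(w)\, \e{\lambda y}, \qquad y \in \overline{B}_r(0),
\end{align*}
and $\abs{b_\lambda(w)} \leq C e^{-\consexp \abs{w-\lambda}^{1/2}}$, with $C$ and $\consexp$ depending only on $d, \Lambda, r$. Given $x \in \overline{B}_r(x_0)$, write $x = x_0 + y$ with $y \in \overline{B}_r(0)$ and factor
\begin{align*}
\ee{wx} = \ee{w x_0}\, \ee{w y} = \ee{w x_0}\, \overline{\e{w y}}.
\end{align*}

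Substituting the Beurling expansion for $\e{wy}$ and taking complex conjugates term by term gives
\begin{align*}
\ee{wx} = \ee{w x_0} \sum_{\lambda \in \Lambda} \overline{b_\lambda(w)}\, \ee{\lambda y}.
\end{align*}
Finally, I would rewrite $\ee{\lambda y} = \e{\lambda x_0}\, \ee{\lambda x}$ to pass back to the original variable $x$, obtaining the representation required by the corollary with coefficients
\begin{align*}
a_\lambda(w) := \ee{w x_0}\, \overline{b_\lambda(w)}\, \e{\lambda x_0}.
\end{align*}

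Since $\abs{a_\lambda(w)} = \abs{b_\lambda(w)} \leq C e^{-\consexp \abs{w-\lambda}^{1/2}}$, the decay estimate is inherited intact. There is no real obstacle in this argument; the only points that need attention are the legitimacy of termwise conjugation of the series (which follows from absolute convergence guaranteed by the exponential bound on $\abs{b_\lambda(w)}$ together with the relative separation of $\Lambda$) and the verification that the constants produced depend only on $d, \Lambda, r$, not on $x_0$ or $w$, which is immediate since the moduli of the unimodular phase factors are $1$.
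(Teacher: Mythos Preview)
Your proof is correct and follows exactly the approach sketched in the paper: translate the evaluation point by $x_0$ and conjugate the identity from Theorem \ref{theo_beurling}, noting that these operations leave the moduli of the coefficients unchanged. You have simply written out explicitly what the paper summarizes in one line.
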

\begin{proof}
This follows by translating and conjugating the equality in Theorem
\ref{theo_beurling}. That does not affect the absolute value of
the coefficients $a_\lambda(w)$ and hence the decay condition is preserved.
\end{proof}

\section{The generalized painless method in Besov-Triebel-Lizorkin spaces}
\label{sec:gen}
We can now show that the construction from Section \ref{sec:cons} yields
an atomic decomposition for the whole class of anisotropic
Besov-Triebel-Lizorkin spaces.

\begin{theo}
\label{th_bandlimited}
Let $\psi \in \wavclass$ (cf. Section \ref{sec:cons}) and let,
\begin{align*}
\Psi \supereq
\set{\psijl := \Diltwo{A^j}T_\lambda \psi}
{j \in \Zst, \lambda \in \Lambda}.
\end{align*}
Then, there exists a family of band-limited functions,
\begin{align*}
\tilde{\Psi} \supereq
\set{\tilpsijl}{j \in \Zst,\lambda \in \Lambda},
\end{align*}
such that the following statements hold for each anisotropic space of
Besov-Triebel-Lizorkin type
$\Essp$, with $0 < p,q \leq \infty$ and $\alpha \in \Rst$.
\begin{itemize}
\item[(a)] The following analysis (coefficient) and synthesis (reconstruction)
operators are bounded.
\begin{align*}
&\coefPsi: \Essp \to \essp(\Lambda), &\qquad
&f \mapsto \left( \ip{f}{\psi_{j,\lambda}} \right)_{j \in \Zst, \lambda \in
\Lambda},
\\
&\coeftilPsi: \Essp \to \essp(\Lambda), &\qquad
&f \mapsto \left( \ip{f}{\tilpsijl} \right)_{j \in \Zst, \lambda \in
\Lambda},
\\
&\recPsi: \essp(\Lambda) \to \Essp, &\qquad
&c \mapsto \sum_{j,\lambda} c_{j,\lambda} \psi_{j,\lambda},
\\
&\rectilPsi: \essp(\Lambda) \to \Essp, &\qquad
&c \mapsto \sum_{j,\lambda} c_{j,\lambda} \tilpsijl.
\end{align*}
\item[(b)] Every $f \in \Essp$ admits the expansions,
\begin{align}
\label{eq_exp_irreg}
f &= \sum_{j \in \Zst} \sum_{\lambda \in \Lambda} 
\ip{f}{\Diltwo{A^j}T_\lambda \psi} \tilpsijl
\\
\nonumber
&= \sum_{j \in \Zst} \sum_{\lambda \in \Lambda} 
\ip{f}{\tilpsijl} \Diltwo{A^j}T_\lambda \psi.
\end{align}
Convergence takes place in the $\Schwartz'/\mathcal{P}$ topology and,
for $p,q < +\infty$, also in the norm of $\Essp$.
\item[(c)] The following norm equivalence holds,
\begin{align*}
\norm{f}_\Essp \approx
\Bignorm{\left( \ip{f}{\psi_{j,\lambda}}
\right)_{j,\lambda}}_\essp
\approx
\Bignorm{\left( \ip{f}{\tilpsijl}
\right)_{j,\lambda}}_\essp
,
\quad (f \in \Essp).
\end{align*}
\end{itemize}
\end{theo}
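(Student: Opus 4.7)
My plan is to construct an explicit dual family $\tilde\Psi$ by combining a Calder\'on-type reproducing formula with Beurling's balayage coefficients (Corollary \ref{coro_beurling}), and then to verify that $\tilde\Psi$ is a family of molecules in the sense of Definition \ref{def_molecules}. Once $\tilde\Psi$ is a molecular system---and $\Psi$ is trivially one, since $\psi \in \Schwartz$ with $\hat\psi$ compactly supported away from the origin---Theorem \ref{th_bessel_bounds} delivers the boundedness claim (a). The reconstruction formulas in (b) are established first in $L^2$ by construction and then extended to every $\Essp$ by density and the continuity from (a); (c) is a formal consequence of (a) and $\rectilPsi \circ \coefPsi = \mathrm{Id}$.

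For the construction, I would first pick an auxiliary $\varpsi \in \Schwartz$ with $\hat\varpsi$ compactly supported inside $\overline{B}_r(x_0)$ (the support of $h = \hat\psi$) satisfying the Calder\'on-type identity
\[
\sum_{j \in \Zst} \hat\psi((A^*)^j w)\,\overline{\hat\varpsi((A^*)^j w)} = 1, \qquad w \neq 0.
\]
This is possible because $|h|$ is bounded below on $Q = A^t V \setminus V$ and $\{(A^*)^{-j} Q\}_j$ tiles $\Rdst \setminus \{0\}$, so one can take $\overline{\hat\varpsi}$ proportional to $1/h$ on $Q$, times a smooth dilation-invariant partition of unity. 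This identity yields the continuous reproducing formula
\[
f = \sum_{j \in \Zst} \int_{\Rdst} \langle f, \Diltwo{A^j} T_y \psi\rangle\, \Diltwo{A^j} T_y \varpsi\, dy,
\]
valid in $L^2$. To discretize the $y$-integral using the set $\Lambda$, I apply Corollary \ref{coro_beurling}: for each fixed $j$, the map $y \mapsto \langle f, \Diltwo{A^j} T_y \psi\rangle$ is, after an anisotropic rescaling by $A^j$, bandlimited to $\overline{B}_r(x_0)$, and the spread condition $\rho(\Lambda)\, r < 1/4$ built into $\wavclass$ is exactly Beurling's hypothesis. The corollary supplies coefficients $a_\lambda(y)$ with subexponential decay $|a_\lambda(y)| \leq C\, e^{-c\, |y-\lambda|^{1/2}}$ that convert the $y$-integral into a $\lambda$-sum, giving
\[
f = \sum_{j \in \Zst}\sum_{\lambda \in \Lambda} \langle f, \Diltwo{A^j} T_\lambda \psi\rangle\, \tilpsijl,
\]
where $\tilpsijl$ is the explicit Beurling-weighted superposition of $\Diltwo{A^j} T_y \varpsi$ in the variable $y$.

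It then remains to check that $\tilde\Psi$ is a family of $(L,M,N)$-molecules for all $L,M,N$. The Fourier transform of $\tilpsijl$ is supported in $(A^*)^{-j} \overline{B}_r(x_0)$, which stays bounded away from the origin, so the bandlimited requirement and all moment conditions are automatic. The spatial decay $|\partial^\beta(\Diltwo{A^{-j}} \tilpsijl)(x)| \leq (1+|x-\lambda|)^{-L}$ follows from the Schwartz decay of $\varpsi$ combined with the subexponential decay of the Beurling coefficients, after being pulled back under the $A^j$ dilation, with constants uniform in $(j,\lambda)$. With Theorem \ref{th_bessel_bounds} now applicable, (a) is immediate; the $L^2$ expansion extends to every $\Essp$ once one checks $\rectilPsi \circ \coefPsi = \mathrm{Id}$ on a dense subset, taken inside $L^2 \cap \Essp$ (or via the $\Schwartz'/\mathcal{P}$ convergence in Remark \ref{rem_conv_rec}). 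The second expansion in (b) follows by a symmetric argument or by duality between $\Essp$ and $\Esspdual$, and (c) is then formal.

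The main technical obstacle is the uniform-in-scale molecule estimate for $\tilpsijl$. The Beurling coefficients $a_\lambda(y)$ come with a decay rate calibrated to the fixed frequency ball $\overline{B}_r(x_0)$, whereas the natural dilation of $\tilpsijl$ is by $A^j$. One must show that after this anisotropic scale change, the subexponential decay in $|y-\lambda|$ combined with the Schwartz decay of $\varpsi$ still produces a polynomial weight $(1+|x-\lambda|)^{-L}$ with constants independent of $j$ and $\lambda$. This is precisely where the specific subexponential rate of Beurling's coefficients, rather than mere summability, becomes decisive.
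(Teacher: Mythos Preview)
Your overall strategy---build a companion window $\varpsi$ satisfying a Calder\'on condition, invoke Beurling's balayage to pass from a known reproducing formula to $\Lambda$-translates, and verify that the resulting dual family is molecular---matches the paper's. The key difference is the intermediate reproducing formula. You propose the \emph{continuous} Calder\'on identity $f=\sum_j\int\ip{f}{\Diltwo{A^j}T_y\psi}\,\Diltwo{A^j}T_y\varpsi\,dy$ and then discretize the $y$-integral via Beurling, defining $\tilpsil$ as a $y$-integral of $a_\lambda(y)\,T_y\varpsi$. The paper instead starts from the \emph{discrete} $\phi$-transform expansion of Theorem~\ref{th_at_desc_bow} on the lattice $\tfrac{1}{b}\Zdst$ (with $\hat\varpsi=b^d\hat\psi/\sum_j\abs{\hat\psi((A^*)^j\cdot)}^2$), then applies Corollary~\ref{coro_beurling} only at the countably many nodes $k/b$ to write $T_{k/b}\psi=\sum_\lambda a_{\lambda,k}T_\lambda\psi$, obtaining the dual as the discrete sum $\tilpsil=\sum_k a_{\lambda,k}T_{k/b}\varpsi$. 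The payoff of the paper's route is twofold: the lattice expansion is already established in every $\Essp$ (including $p=\infty$ or $q=\infty$) by Theorem~\ref{th_at_desc_bow}, so the passage to $\Lambda$ is a purely algebraic rearrangement and no $L^2$-to-$\Essp$ density argument is needed; and only countably many Beurling coefficients enter, so no measurability of $y\mapsto a_\lambda(y)$ need be checked. Your route would work, but you must address both points---the density extension is genuinely delicate at the endpoints where norm-density of nice functions fails, and Corollary~\ref{coro_beurling} as stated is purely existential for each fixed $w$.

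One minor correction: no ``anisotropic rescaling by $A^j$'' is needed to see that $y\mapsto\ip{f}{\Diltwo{A^j}T_y\psi}$ is bandlimited to $\supp(\hat\psi)$; this holds directly, since $\Diltwo{A^j}T_y=T_{A^jy}\Diltwo{A^j}$ puts the $A^j$ on $f$ rather than on $\psi$. Consequently your dual elements factor as $\tilpsijl=\Diltwo{A^j}\tilpsil$ with $\tilpsil$ independent of $j$, and the molecule estimate reduces to bounding $\partial^\beta\tilpsil(x)$ alone---uniformity in $j$ is then automatic, so the ``main technical obstacle'' you flag dissolves once this structure is recognized.
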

\begin{remark}
For all $(L,M,N)$, the family $\{\tilpsijl\}_{j, \lambda}$ is a multiple of a
set of molecules in the sense of Definition \ref{def_molecules}. Hence, the
operators in (a) are well-defined. In addition, the dual functions have the
form,
\begin{align*}
\tilpsijl = \Diltwo{A^j}{\tilpsil}, 
\end{align*}
for a certain family of functions $\set{\tilpsil}{\lambda \in \Lambda}$.
\end{remark}

\begin{proof}[Proof of Theorem \ref{th_bandlimited}]
Let us adopt the notation from Section \ref{sec:cons}.
It follows from the construction of $\psi$ that
$\sum_{j \in \Zst} \abs{\hat{\psi}((A^*)^jw)}^2 \approx 1$, for $w \not=0$,
and that $\supp(\hat{\psi}) \subseteq \overline{B}_r(w_0)$, for some
$w_0 \in \Rdst$. In addition, since $0 \notin \supp(\hat{\psi})$,
we have that $\supp(\hat{\psi}) \subseteq [-b/2,b/2]^d \setminus \sett{0}$, for some
$b>0$.

We now produce a ``dual window'' $\varpsi$ so that $\psi, \varpsi$ satisfy the Calder\'on condition.
This follows the now standard method of Frazier and Jawerth. Let $\varpsi$ be such that,
\begin{align*}
\hat{\varpsi}(w) := b^d \hat{\psi}(w) \left(\sum_{j \in \Zst}
\abs{\hat{\psi}((A^*)^jw)}^2 \right)^{-1}, 
\qquad (w \not=0),
\end{align*}
so that,
\begin{align*}
&\supp(\hat{\varpsi}) \subseteq B_r(w_0) \cap [-b/2,b/2]^d \setminus \sett{0},
\\
&\sum_{j \in \Zst}
\hat{\psi}((A^*)^jw)
\overline{\hat{\varpsi}((A^*)^jw)} = b^{d},
\qquad (w \not=0).
\end{align*}
It is easy to see that $\varpsi \in \mathcal{S}(\Rdst)$
(see for example \cite[Lemma 3.6]{bo05-1} or \cite{boho06}).
By rescaling the result in Theorem \ref{th_at_desc_bow}, we see that
the windows $\psi, \varpsi$ provide the following lattice-based expansion for every
anisotropic Besov-Triebel-Lizorkin space $\Essp$,
\begin{align}
\label{eq_exp}
f &= \sum_{j \in \Zst, k \in \Zdst}
\ip{f}{\Diltwo{A^ j}T_{\frac{k}{b}} \psi}
\Diltwo{A^j}T_{\frac{k}{b}}\varpsi
\\
&=
\nonumber
\sum_{j \in \Zst, k \in \Zdst}
\ip{f}{\Diltwo{A^ j}T_{\frac{k}{b}} \varpsi}
\Diltwo{A^j}T_{\frac{k}{b}}\psi,
\qquad (f \in \Essp),
\end{align}
with convergence in the $\mathcal{S'}/\mathcal{P}$ topology.
For $j \in \Zst$ and $\lambda \in \Lambda$,
let $\psi_{j,\lambda} := \Diltwo{A^j}T_{\lambda} \psi$.
For each $k \in \Zdst$, Corollary \ref{coro_beurling} gives a sequence
$\set{a_{\lambda,k}}{\lambda \in \Lambda} \subseteq \bC$ such that,
\begin{align}
\nonumber
&e_{-k/b}(w) = \sum_{\lambda \in \Lambda} a_{\lambda,k} e_{-\lambda}(w),
\quad
\mbox{for all $w \in B_r(w_0)$},
\\
\label{eq_coef_alk}
&\abs{a_{\lambda,k}} \lesssim e^{-\consexp \abs{\lambda-k/b}^{1/2}} .
\end{align}
Since $\hat{\psi}$ and $\hat{\varpsi}$ are both supported on
$\overline{B}_r(w_0)$, it follows that,
\begin{align}
\label{eq_tkrl}
T_{\frac{k}{b}}\psi = \sum_{\lambda \in \Lambda} a_{\lambda,k} T_\lambda \psi. 
\end{align}
For $j \in \Zst$ and $\lambda \in \Lambda$, let,
\begin{align*}
\tilde{\psi}_{j,\lambda}:=
\sum_{k\in\Zdst} a_{\lambda,k} \Diltwo{A^j}T_{\frac{k}{b}}\varpsi
=
\Diltwo{A^j} \tilpsil,
\end{align*}
where,
\begin{align}
\label{eq_tiltkrl}
\tilpsil := \sum_{k\in\Zdst} a_{\lambda,k} T_{\frac{k}{b}}\varpsi.
\end{align}
Formally replacing Equation \eqref{eq_tkrl} in Equation \eqref{eq_exp}
yields the expansions in Equation \eqref{eq_exp_irreg}. For example,
the first expansion in Equation \eqref{eq_exp_irreg} follows (formally)
from the corresponding expansion in Equation \eqref{eq_exp}
by the following computation.
\begin{align}
\nonumber
& \sum_{k \in \Zdst}
\ip{f}{\Diltwo{A^j}{T_{\frac{k}{b}} \psi}}
\Diltwo{A^j}T_{\frac{k}{b}}\varpsi
\\
\nonumber
&\qquad\qquad=
\sum_{k \in \Zdst}
\ip{f}{\Diltwo{A^j}{\sum_{\lambda \in \Lambda} a_{\lambda,k} T_\lambda \psi}}
\Diltwo{A^j}T_{\frac{k}{b}}\varpsi
\\
%\label{eq_changekl1}
\nonumber
& \qquad\qquad =
\sum_{k \in \Zdst}
\sum_{\lambda \in \Lambda} a_{\lambda,k}
\ip{f}{\Diltwo{A^j}{ T_\lambda \psi}}
\Diltwo{A^j}T_{\frac{k}{b}}\varpsi
\\
\label{eq_changekl2}
& \qquad\qquad =
\sum_{\lambda \in \Lambda}
\sum_{k \in \Zdst}
a_{\lambda,k}
\ip{f}{\Diltwo{A^ j}{ T_\lambda \psi}}
\Diltwo{A^j}T_{\frac{k}{b}}\varpsi
\\
\nonumber
& \qquad\qquad =
\sum_{\lambda \in \Lambda}
\ip{f}{\Diltwo{A^j}{ T_\lambda \psi}}
\sum_{k \in \Zdst} a_{\lambda,k} \Diltwo{A^j}T_{\frac{k}{b}}\varpsi
\\
\nonumber
& \qquad\qquad =
\sum_{\lambda \in \Lambda}
\ip{f}{\Diltwo{A^j}{ T_\lambda \psi}}
\Diltwo{A^j} \sum_{k \in \Zdst} a_{\lambda,k} T_{\frac{k}{b}}\varpsi
\\
\nonumber
& \qquad\qquad =
\sum_{\lambda \in \Lambda}
\ip{f}{\Diltwo{A^j}{ T_\lambda \psi}} \tilpsijl.
\end{align}
Let us now observe that these formal operations are indeed valid. We discuss the
validity of the first expansion in Equation \eqref{eq_exp_irreg}, the second one
being analogous. Because of the fast decay of the numbers in Equation
\eqref{eq_coef_alk}, the series in Equations \eqref{eq_tkrl} and
\eqref{eq_tiltkrl} converge absolutely in $L^2$. Hence, the dilation
operator $\Diltwo{A^j}$ can be interchanged with both summations. We now discuss
the interchange of summation in $k$ and $\lambda$ in Equation
\eqref{eq_changekl2}. For $\lambda \in \Lambda$,
\begin{align*}
\ip{f}{\Diltwo{A^j}{T_\lambda \psi}}
=
\ip{f}{T_{A^j \lambda} \Diltwo{A^j} \psi}
=
\ip{\hat{f}\cdot\overline{\widehat{\Diltwo{A^j} \psi}}}{e_{A^j\lambda}}.
\end{align*}
Note that this computation is valid since $\widehat{\Diltwo{A^j} \psi}$
vanishes in a neighborhood of the origin. Since
$\hat{f} \cdot \overline{\widehat{\Diltwo{A^j} \psi}}$ is a compactly supported
distribution, its distributional Fourier transform is a smooth function with at
most polynomial growth. This shows that,
\begin{align*}
\abs{\ip{f}{\Diltwo{A^j}{T_\lambda \psi}}}
\lesssim
(1+\abs{A^j \lambda})^s
\lesssim
(1+\abs{\lambda})^s,
\end{align*}
for some constant $s>0$ (where the implicit constants of course
depend on $j$). This growth estimate together with the fast decay
of the coefficients $a_{\lambda,k}$ in Equation \eqref{eq_coef_alk}
justifies the change of the summation order.

We now prove that the analysis and synthesis operators in item (a) are
bounded. Since $\psi \in \Schwartz$ and $\hat{\psi}$ vanishes near the origin,
it is clear that for each $(L,M,N)$ the set $\set{\psi_{j,\lambda}}{j \in
\Zst,\lambda \in \Lambda}$ is a constant multiple of a set of
$(L,M,N)$-molecules. 
Hence, by Theorem \ref{th_bessel_bounds},
the operators $\coefPsi, \recPsi$ are bounded on each of the
relevant spaces.

The functions $\set{\tilpsil}{\lambda \in \Lambda}$ have all their moments
vanishing. In addition, since $\varpsi \in \Schwartz$, for every $L >0$ and
every multi-index $\beta \in \Nst_0^d$,
\begin{align*}
\abs{\partial^\beta(\varpsi)(x)}
\lesssim \w{x}{-L}.
\end{align*}
Using the estimate in Equation \eqref{eq_coef_alk},
we see that the functions $\set{\tilpsil}{\lambda \in \Lambda}$ satisfy,
\begin{align*}
\abs{\partial^\beta(\tilpsil)(x)}
&\leq \sum_{k\in\Zdst} \abs{a_{\lambda,k}}
\abs{\partial^\beta(\varpsi)(x-k/b)}
\\
&\lesssim \sum_{k\in\Zdst} e^{-\consexp\abs{k/b-\lambda}^{1/2}} \w{x-k/b}{-L}
\\
&\lesssim \w{x-\lambda}{-L}
\sum_{k\in\Zdst} e^{-\consexp\abs{k/b-\lambda}^{1/2}} \w{\lambda-k/b}{L}
\\
&\lesssim \w{x-\lambda}{-L}.
\end{align*}
From that estimate it follows that for each $(L,M,N)$, the family
\begin{align*}
\tilPsi \supereq
\set{\tilpsijl = \Diltwo{A^j} \tilpsil}
{j \in \Zst, \lambda \in \Lambda}
\end{align*}
is also a multiple of a set of molecules
(cf. Definition \ref{def_molecules}).
Therefore, Theorem \ref{th_bessel_bounds} implies that
the operators $\coeftilPsi, \rectilPsi$ are bounded on each of the
relevant spaces. Finally, the claimed norm equivalence follows from the fact
that $\coefPsi$ and $\coeftilPsi$ have bounded right-inverses (namely,
$\rectilPsi$ and $\recPsi$).
\end{proof}
\begin{coro}
\label{coro_bandlimited}
Let $\Lambda \subseteq \Rdst$ be any well-spread set and let
$A \in \Rst^{d\times d}$ be an expansive matrix. Then there exists 
a Schwartz class function $\psi$ with Fourier transform supported
on a compact set not containing the origin, such that the irregular wavelet
system,
\begin{align*}
\set{\Diltwo{A^j}T_{\lambda} \psi}{j\in\Zst, \lambda \in \Lambda},
\end{align*}
provides an atomic decomposition for the whole class of anisotropic Besov and
Triebel-Lizorkin spaces $\Essp, 0 < p,q \leq \infty, \alpha \in \Rst$
(in the precise sense of Theorem \ref{th_bandlimited}).
\end{coro}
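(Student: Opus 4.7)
The plan is to obtain Corollary \ref{coro_bandlimited} as an immediate consequence of the two preceding results in the paper: the non-emptiness of the class $\wavclass$ (Remark \ref{rem_class_not_empty}) and the atomic decomposition theorem for generalized painless wavelets (Theorem \ref{th_bandlimited}). The corollary essentially asserts that the construction of Section \ref{sec:cons} can be performed for an arbitrary pairing of an expansive matrix $A$ and a well-spread set $\Lambda$, and that the resulting window has the required properties, so no genuinely new work is needed beyond verifying compatibility.

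Concretely, I would proceed as follows. First, given the well-spread set $\Lambda$ and the expansive matrix $A$, fix a finite radius $R>0$ with $\rho(\Lambda) \leq R < +\infty$ (such $R$ exists by the second part of the well-spread condition). Next, choose $r > 0$ small enough that $\rho(\Lambda) r < 1/4$, as required in the construction. Then I pick a bounded neighborhood $V$ of the origin with $0 \in V^\circ$ and $\partial V$ of null measure, small enough that $Q = (A^tV)\setminus V$ is contained in a closed Euclidean ball $\overline{B}_r$ of radius $r$ not containing the origin; since the spectral radius of $A^t$ exceeds $1$ and $V$ can be made arbitrarily small, this is possible. Finally, select any $h \in C^\infty(\Rdst)$ with $\inf_{x \in Q}\abs{h(x)} > 0$ and $0 \notin \supp(h) \subseteq \overline{B}_r$ (for instance, obtained by mollifying the indicator of a slightly enlarged version of $Q$). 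Setting $\psi := \widehat{h}^{\,\vee}$ places $\psi$ in the class $\wavclass$, and by construction $\psi \in \Schwartz(\Rdst)$ with $\hat\psi = h$ compactly supported away from the origin.

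Having produced $\psi$, I would invoke Theorem \ref{th_bandlimited} directly: the hypotheses of that theorem are exactly that $\psi \in \wavclass$, so all four boundedness statements in item (a), both expansions in item (b) (with convergence in $\Schwartz'/\mathcal{P}$ and in norm when $p,q<+\infty$), and the norm equivalences in item (c) hold for every anisotropic Besov-Triebel-Lizorkin space $\Essp$ with $0 < p,q \leq \infty$ and $\alpha \in \RR$. This is precisely what the corollary calls ``atomic decomposition in the precise sense of Theorem \ref{th_bandlimited}'', so the proof concludes.

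I do not expect any real obstacle. The one mildly delicate check is that the neighborhood $V$ can be shrunk so as to force $Q = (A^tV)\setminus V$ into a ball of the specified small radius $r$ tied to $\rho(\Lambda)$; this is essentially the content of Remark \ref{rem_class_not_empty} and follows because $A^t$ expands $V$ strictly while $V$ itself can be taken arbitrarily small in diameter. Once this is observed, the corollary is a one-line consequence of Theorem \ref{th_bandlimited}.
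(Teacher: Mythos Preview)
Your proposal is correct and matches the paper's own proof, which simply says the corollary ``follows immediately from Theorem \ref{th_bandlimited} and Remark \ref{rem_class_not_empty}.'' One small slip: the construction only requires $0 \notin \supp(h)$ and $\supp(h) \subseteq \overline{B}_r$, not that the ball $\overline{B}_r$ itself avoid the origin---this matters because $Q=(A^tV)\setminus V$ may surround the origin, but since $0\in V^\circ$ the set $Q$ stays at positive distance from $0$, so a suitable $h$ exists.
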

\begin{proof}
This follows immediately from Theorem \ref{th_bandlimited} and Remark
\ref{rem_class_not_empty}.
\end{proof}

\section{Compactly supported non-uniform wavelets}
\label{sec:compact}
In this section we will combine Theorem \ref{th_bandlimited} with a
perturbation argument to show that any expansive matrix and any a well-spread
set of translation nodes admit a compactly supported wavelet frame. The
perturbation method is quite standard (see for example \cite{goza97,pe00,
aibego01,krpe02, nira11}). However, we are interested in obtaining results 
that hold uniformly for a whole range of Triebel-Lizorkin spaces. This requires
to deal with a number of technical matters. To this end, we first introduce the
relevant tools.
\subsection{Some technical tools for Triebel-Lizorkin spaces}
\label{sec:tech}
In order to carry out the construction in this section we will need a number of
technical tools concerning duality and interpolation in
anisotropic Besov-Triebel-Lizorkin spaces. In the Triebel-Lizorkin
case these have been developed in \cite{bo08-3}. The Besov case is technically
much simpler, but it does not seem to have been explicitly treated in the
literature. Because of this, from now on we restrict ourselves to the
Triebel-Lizorkin case. Moreover, to avoid certain technicalities
with the pairing $\ip{\cdot}{\cdot}$ we further restrict ourselves
to the unweighted case $\mu=dx$.

The wavelet system that we will construct in Section \ref{sec:compact} may not
be a set of molecules (see Definition \ref{def_molecules}). Hence, the meaning
of the coefficient mapping needs to be clarified. In \cite{bo08-3}, Bownik has
extended the pairing $\ip{\cdot}{\cdot}$ to $\Essp \times \Esspdual$,
$(1 \leq p,q < +\infty)$ and has moreover characterized the dual space of
$\Essp$ by means of it. Using this extension, the analysis map $f \mapsto
\coefPhi(f) := (\ip{f}{\phijl})_{j\in \Zst, \lambda \in \Lambda}$ is
well-defined on $\Essp$ if $\phijl \in \Esspdual$ for all $j,\lambda$.
(Here, $p'$ denotes de H\"older conjugate of $p$, given by $1/p+1/p'=1$.)

The following result is Theorem 6.2 in \cite{bo08-3}.
\begin{theo}[Bownik]
\label{th_interp}
Let $\Esspz, \Esspo$ be (anisotropic, homogeneous) Triebel-Lizorkin spaces and
let $\esspz, \esspo$ be the corresponding sequence spaces on $\Zst \times
\Zdst$, with
$\alpha_0, \alpha_1 \in \Rst$, $0<p_0,q_0<+\infty$,
$0<p_1,q_1 \leq +\infty$. Then for $0<\theta<1$ we can identify the complex
interpolation spaces,
\begin{align*}
[\esspz, \esspo]_\theta = \essp,
\\
[\Esspz, \Esspo]_\theta = \Essp,
\end{align*}
where $1/p = (1-\theta) / p_0 + \theta / p_1$,
$1/q = (1-\theta) / q_0 + \theta /q_1$,
and $\alpha = (1-\theta) \alpha_0 + \theta \alpha_1$.
\end{theo}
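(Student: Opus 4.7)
The plan is a two-step reduction: first, I would use the smooth atomic decomposition of Theorem \ref{th_at_desc_bow} to realize each $\Essp$ as a retract of its sequence space $\essp(\Zdst)$, so that the interpolation identity for function spaces follows from the corresponding identity for sequence spaces; second, I would interpolate the sequence spaces by embedding them isometrically into a weighted mixed-norm Lebesgue space and invoking classical Calder\'on interpolation for $L^p(\ell^q)$.

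For the first step, Theorem \ref{th_at_desc_bow} exhibits bounded maps $\coefpsi: \Essp \to \essp(\Zdst)$ and $\recvarpsi: \essp(\Zdst) \to \Essp$ with $\recvarpsi \circ \coefpsi = \mathrm{id}_\Essp$. The crucial point is that the \emph{same} pair $(\psi,\varpsi)$ witnesses this retract structure simultaneously for every admissible $(\alpha,p,q)$, so that the analysis and synthesis operators are compatible on the pair $(\Esspz,\Esspo)$ and the pair $(\esspz,\esspo)$. This is precisely the hypothesis of the classical retract principle for the complex method (see, e.g., Bergh--L\"ofstr\"om, Theorem~6.4.2), which yields
\[
[\Esspz, \Esspo]_\theta \;=\; \recvarpsi\bigl([\esspz, \esspo]_\theta\bigr),
\]
reducing everything to the sequence space identity $[\esspz, \esspo]_\theta = \essp$.

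For the sequence spaces, I would exploit the fact that, for each fixed $j$, the cubes $Q_{j,k} = A^j([0,1]^d + k)$ tile $\Rdst$. The map
\[
a = (a_{j,k})_{j,k} \;\longmapsto\; \Bigl(x \mapsto |\det A|^{-j(\alpha+1/2)} a_{j,k(x,j)}\Bigr)_{j \in \Zst},
\]
where $k(x,j)$ is the unique index with $x \in Q_{j,k}$, sends $\essp$ isometrically onto a closed subspace of $L^p(\Rdst; \ell^q(\Zst))$ by the definitions \eqref{defftrieseq}. A bounded projection onto this subspace is furnished by averaging each component over its cube, and once again the same projection works for every admissible parameter choice. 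The classical Calder\'on interpolation identity for vector-valued Lebesgue spaces,
\[
[L^{p_0}(\ell^{q_0}), L^{p_1}(\ell^{q_1})]_\theta \;=\; L^p(\ell^q),
\]
combined with the retract principle, then delivers $[\esspz, \esspo]_\theta = \essp$.

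The main obstacle will be the endpoint case $p_1 = +\infty$ or $q_1 = +\infty$. The space $\dot{F}^{\alpha,q}_{+\infty}$ is not presented as an $L^p(\ell^q)$ norm but rather by the Carleson-type expression \eqref{defftrieqinf}, and the corresponding sequence norm \eqref{defftrieqseqinf} must first be identified with the natural candidate through the $\phi$-transform so that the retract argument applies uniformly. In addition, the complex interpolation of $L^\infty$-valued Lebesgue spaces requires the appropriate variant of the Calder\'on method (upper versus lower, and its duality with $L^1$-type spaces), and one has to verify that the pairing $\ip{\cdot}{\cdot}$ extending to $\Essp \times \Esspdual$ remains compatible with the interpolation scale. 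These are exactly the points that occupy the technical work of \cite{bo08-3}, but the overall architecture is the retract-plus-$L^p(\ell^q)$ reduction sketched above.
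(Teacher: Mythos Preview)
The paper does not prove this theorem: it is quoted as Theorem~6.2 of \cite{bo08-3} with no argument beyond the citation, so there is no in-paper proof to compare against. Your outline is in fact the architecture Bownik uses in \cite{bo08-3} (and Frazier--Jawerth \cite{frja90} in the isotropic case): retract $\Essp$ onto $\essp$ via the $\phi$-transform of Theorem~\ref{th_at_desc_bow}, then interpolate the sequence spaces by realizing them inside a vector-valued Lebesgue space.

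There is, however, a concrete slip in your second step. The map you display,
\[
a \longmapsto \Bigl(x \mapsto \abs{\det A}^{-j(\alpha+1/2)} a_{j,k(x,j)}\Bigr)_{j},
\]
carries the factor $\abs{\det A}^{-j(\alpha+1/2)}$ and therefore depends on $\alpha$. It is \emph{not} the same map on $\esspz$ and on $\esspo$ when $\alpha_0 \neq \alpha_1$, so the retract principle cannot be invoked with this pair of embeddings. The correct move is to drop the $\alpha$-factor from the map (making it parameter-independent) and absorb it into a weight on the target, so that $\essp$ sits inside $L^p(\ell^q_w)$ with $w_j = \abs{\det A}^{-j(\alpha+1/2)}$. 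One then needs the weighted identity
\[
[L^{p_0}(\ell^{q_0}_{w_0}),\,L^{p_1}(\ell^{q_1}_{w_1})]_\theta = L^p(\ell^q_{w_0^{1-\theta}w_1^{\theta}}),
\]
not the unweighted Calder\'on identity you quote. You hint at this with the phrase ``weighted mixed-norm Lebesgue space'' in your opening sentence, but the displayed map and the subsequent appeal to $[L^{p_0}(\ell^{q_0}),L^{p_1}(\ell^{q_1})]_\theta = L^p(\ell^q)$ do not carry it out. This weighted step, together with the Carleson-type endpoint $p=\infty$ that you correctly flag, is precisely where the substantive work in \cite{bo08-3} lies.
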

\begin{remark}
The statement $[\Esspz, \Esspo]_\theta = \Essp$ means that
the underlying sets are equal and the norms are equivalent.
The constants in that norm equivalence may depend on
$\alpha, p$ and $q$.
\end{remark}

\begin{coro}
\label{coro_int}
If a linear operator $T$ is bounded on the anisotropic Triebel-Lizorkin spaces
$\Essp$ for all combinations of indexes $\alpha=\pm \alphanot$, $p=1,\infty$,
$q=1,\infty$, then $T$ is bounded on $\Essp$ for the whole
range $-\alphanot \leq \alpha \leq \alphanot$, $1 \leq p,q<\infty$. 
\end{coro}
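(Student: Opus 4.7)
The plan is to apply Theorem \ref{th_interp} iteratively to fill in the interior of the parameter cube $(\alpha, 1/p, 1/q) \in [-\alphanot,\alphanot] \times [0,1] \times [0,1]$ from the eight corner spaces on which $T$ is already known to be bounded. Because Theorem \ref{th_interp} requires the first of the two interpolated spaces to have $p_0, q_0 < \infty$, the interpolations must be ordered with that restriction in mind.

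First I would sweep the $\alpha$-direction at the corner $(p,q) = (1,1)$: Theorem \ref{th_interp} identifies $[\dot{F}^{-\alphanot,1}_1, \dot{F}^{\alphanot,1}_1]_\theta$ with $\dot{F}^{(2\theta-1)\alphanot,\,1}_1$, and the hypothesis on $T$ transfers to give boundedness on every $\dot{F}^{\alpha,1}_1$ with $\alpha \in [-\alphanot,\alphanot]$.

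Next I would open up the $p$-direction along the edge $q=1$: interpolating the already-obtained space $\dot{F}^{\alpha_0,1}_1$ (whose first-space parameters $p_0 = q_0 = 1$ are finite) against the corner $\dot{F}^{\pm\alphanot,1}_\infty$ produces $\dot{F}^{\alpha,1}_p$ with $\alpha = (1-\theta)\alpha_0 + \theta(\pm\alphanot)$ and $p = 1/(1-\theta)$. A short case analysis over the sign and over $\alpha_0 \in [-\alphanot,\alphanot]$ shows that all targets $(\alpha, 1, p)$ with $\alpha \in [-\alphanot,\alphanot]$ and $p \in [1,2]$ are reached. To cover $p > 2$ one iterates: the newly-established spaces $\dot{F}^{\alpha_0,1}_{p_0}$ with $p_0 \leq 2$ (still satisfying $p_0 < \infty$, $q_0 = 1$) serve as first argument in a new interpolation against $\dot{F}^{\pm\alphanot,1}_\infty$, which doubles the accessible range of $p$ at each step, exhausting $[1,\infty)$ in finitely many iterations. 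A third, entirely analogous stage interpolates these newly-bounded spaces against the corners $\dot{F}^{\pm\alphanot,\infty}_{p'}$ for $p' \in \{1,\infty\}$ (the first-space condition still holds because $q_0 = 1$), and iterative doubling as before extends boundedness to every $q \in [1,\infty)$ while $\alpha$ and $p$ remain in the desired ranges.

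The main obstacle is precisely the asymmetric hypothesis $p_0, q_0 < \infty$ of Theorem \ref{th_interp}, which forbids direct interpolation between two corner spaces that both have $p=\infty$ or both have $q=\infty$; this is what dictates the order of the three sweeps above. A secondary obstacle is that each single bilinear interpolation traces out only a one-parameter family inside the three-dimensional parameter cube, so no single step reaches the full target range. The iterative doubling is what converts the eight corner hypotheses into coverage of the full interior after finitely many applications of Theorem \ref{th_interp}.
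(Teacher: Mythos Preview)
Your argument is correct, but the paper's route is shorter and avoids the iteration entirely. The paper observes that, as an immediate consequence of Theorem~\ref{th_interp}, the set
\[
I := \set{(1/p,\,1/q,\,\alpha)}{1 \leq p,q < \infty,\ T \mbox{ bounded on } \Essp}
\]
is \emph{convex} in $(0,1]^2 \times [-\alphanot,\alphanot]$. It then applies Theorem~\ref{th_interp} just three times (always with $p_0=q_0=1$, so the finiteness hypothesis is trivially met) to place into $I$, at each level $\alpha=\pm\alphanot$, the diagonal $\{(x,x)\}$, the top edge $\{(x,1)\}$, and the right edge $\{(1,y)\}$ of the square $(0,1]^2$. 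The convex hull of these three segments, taken over both levels $\alpha=\pm\alphanot$, is already the full box $(0,1]^2\times[-\alphanot,\alphanot]$, and the proof ends.

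What your approach does differently is to forgo the convexity observation and instead build the same convex hull by explicit repeated interpolation, doubling the accessible range of $p$ and then of $q$ at each step. This works, and your handling of the asymmetric hypothesis $p_0,q_0<\infty$ is correct. The cost is the bookkeeping of the three nested sweeps and the doubling argument; the paper's convexity remark replaces all of that with a single geometric step. One small wording issue: ``exhausting $[1,\infty)$ in finitely many iterations'' is not quite right---each fixed $p<\infty$ is reached after finitely many steps, but no finite number covers the whole ray. This is harmless here since boundedness is asserted space by space.
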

\begin{remark}
The reason why we exclude the cases $p=\infty$ or $q=\infty$ is that 
Theorem \ref{th_interp} requires $p_0, q_0 < \infty$.
\end{remark}
\begin{proof}[Proof of Corollary \ref{coro_int}]
Let $B, I$ be the sets,
\begin{align*}
B &:= \set{(p,q,\alpha)}{1 \leq p,q \leq \infty,
-\alphanot \leq \alpha \leq \alphanot, \mbox{ and $T$ is bounded on }\Essp},
\\
I &:= \set{(1/p,1/q,\alpha)}{1 \leq p,q < \infty, (p,q,\alpha) \in B}
\subseteq (0,1] \times (0,1] \times [-\alphanot,\alphanot].
\end{align*}
The conclusion will follow if we prove that
$I$ is actually $(0,1] \times (0,1] \times [-\alphanot,\alphanot]$.
Since, by Theorem \ref{th_interp}, $I$ is a convex set,
it will be sufficient to show that $I$ contains certain points.

Since $(1,1,\pm \alphanot), (+\infty,+\infty,\pm \alphanot) \in I$,
using Theorem \ref{th_interp} with $p_0=q_0=1, \alpha_0=\pm\alphanot$
and $p_1=q_1=+\infty, \alpha_1=\pm\alphanot$ it follows that,
\begin{align}
\label{line1}
\set{(1/p,1/p,\pm\alphanot)}{1 \leq p <+\infty} 
=
\set{(x,x,\pm\alphanot)}{0 < x \leq 1}
\subseteq I.
\end{align}

Secondly, since
$(1,1,\pm \alphanot), (+\infty, 1,\pm \alphanot) \in B$,
using Theorem \ref{th_interp} with $p_0=q_0=1, \alpha_0=\pm\alphanot$
and $p_1=+\infty, q_1=1, \alpha_1=\pm\alphanot$ it follows that,
\begin{align}
\label{line2}
\set{(1/p,1/1,\pm\alphanot)}{1 \leq p <+\infty} 
=
\set{(x,1,\pm\alphanot)}{0 < x \leq 1} 
\subseteq I.
\end{align}

Similarly, since
$(1,1,\pm \alphanot), (1,+\infty,\pm \alphanot) \in I$,
using Theorem \ref{th_interp} with $p_0=q_0=1, \alpha_0=\pm\alphanot$
and $p_1=1, q_1=+\infty, \alpha_1=\pm\alphanot$ it follows that,
\begin{align}
\label{line3}
\set{(1/1,1/q,\pm\alphanot)}{1 \leq q <+\infty}
=
\set{(1,y,\pm\alphanot)}{0 < y \leq 1}
\subseteq I.
\end{align}
Since $I$ is a convex set containing the lines in Equations
\eqref{line1}, \eqref{line2}, \eqref{line3}, it follows that,
\begin{align*}
I = (0,1] \times (0,1] \times [-\alphanot,\alphanot],
\end{align*}
as desired.
\end{proof}
\subsection{Construction of compactly supported windows}
By Theorem \ref{th_bandlimited}, the wavelet frame constructed in
Section \ref{sec:cons} provides an atomic decomposition for all the
unweighted anisotropic Triebel-Lizorkin spaces. We will now use this together
with a perturbation argument to show that these spaces also admit an
atomic decomposition generated by a compactly supported window and the same set
of translations and dilations. 
\begin{theo}
\label{th_compact}
Let $\Lambda \subseteq \Rdst$ be any well-spread set and let
$A \in \Rst^{d \times d}$ be an expansive
matrix. Let $N \in \Nst$ and $\alpha_0 >0$. Then there exists a
compactly supported function $\varphi \in C^\infty(\Rdst)$ such that
\begin{align*}
&\int_{\Rdst} x^\beta \varphi(x) dx = 0,
\quad
\mbox{for all $\abs{\beta} \leq N$,}
\end{align*}
and a family of functions,
\begin{align*}
\tilde{\Phi} \supereq \set {\tilphijl}
{j \in \Zst, \lambda  \in \Lambda},
\end{align*}
such that the following statements hold for the
class of unweighted anisotropic Triebel-Lizorkin spaces.
\begin{itemize}
\item[(a)] $\tilde{\Phi} \subseteq \Essp$,
for all $-\alpha_0 \leq \alpha \leq \alpha_0$  and $1 \leq p,q \leq +\infty$.
\item[(b)] Every $f \in \Essp$,
$(-\alpha_0 \leq \alpha \leq \alpha_0, 1 \leq p,q < \infty)$,
admits the norm convergent expansion,
\begin{align*}
f &= \sum_{j \in \Zst, \lambda \in \Lambda}
\ip{f}{\Diltwo{A^j}T_\lambda \varphi} \tilphijl
\\
&= \sum_{j \in \Zst, \lambda \in \Lambda}
\ip{f}{\tilphijl} \Diltwo{A^j}T_\lambda \varphi.
\end{align*}
\item[(c)] Further, for $f \in \Essp$,
$(-\alpha_0 \leq \alpha \leq \alpha_0, 1 \leq p,q < \infty)$,
\begin{align*}
\norm{f}_\Essp \approx
\Bignorm{\left(\ip{f}{\Diltwo{A^j}T_\lambda \varphi}\right)_{j,\lambda}}_\essp
\approx
\Bignorm{\left(\ip{f}{\tilphijl}\right)_{j,\lambda}}_\essp.
\end{align*}
\end{itemize}
\end{theo}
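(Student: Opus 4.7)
The plan is to perturb the bandlimited system of Theorem \ref{th_bandlimited} into a compactly supported one by a quantitative approximation followed by a Neumann series. The identity $\rectilPsi \circ \coefPsi = I$ on $\Essp$ from Theorem \ref{th_bandlimited}(b) serves as the anchor: if $\psi$ is replaced by a nearby compactly supported $\varphi$ with $N$ vanishing moments, then $\rectilPsi \circ \coefPhi = I - E$ with $E$ small, so that $(I-E)^{-1}$ is available by Neumann series and determines the dual family $\tilphijl$.

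First I would construct $\varphi$. Fix a radius $R$ and a smooth cutoff $\chi_R \in C_c^\infty(\Rdst)$ with $\chi_R \equiv 1$ on $\overline{B}_R$, and let $\varphi_0 := \chi_R \psi$. Since $\psi \in \Schwartz$ has all moments vanishing, the moments of $\varphi_0$ up to order $N$ can be made arbitrarily small by taking $R$ large. Correct $\varphi_0$ by subtracting a fixed finite linear combination of compactly supported bumps whose moments up to order $N$ form an invertible matrix, obtaining $\varphi \in C_c^\infty(\Rdst)$ with exactly $N$ vanishing moments. A standard computation then shows that, for any prescribed $L,M \geq 0$, the residue $\eta := \psi - \varphi$ satisfies
\begin{align*}
|\partial^\beta \eta(x)| \leq \varepsilon_R\, (1+|x|)^{-L}, \qquad |\beta| \leq M,
\end{align*}
with $\varepsilon_R \to 0$ as $R \to \infty$, and $\eta$ itself has $N$ vanishing moments.

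Set $\phijl := \Diltwo{A^j}T_\lambda \varphi$ and $\eta_{j,\lambda} := \Diltwo{A^j}T_\lambda \eta$. For every $(L,M)$ (with $N$ as fixed), the family $\{\phijl\}$ is a bounded multiple and $\{\eta_{j,\lambda}\}$ an $\varepsilon_R$-multiple of an $(L,M,N)$-molecule family in the sense of Definition \ref{def_molecules}. Theorem \ref{th_bessel_bounds} then gives boundedness of $\coefPhi, \recPhi$ on every $\Essp$, together with
\begin{align*}
E := \rectilPsi \circ (\coefPsi - \coefPhi), \qquad \|E\|_{\Essp \to \Essp} \lesssim \varepsilon_R.
\end{align*}
Choosing $R$ large enough that $\|E\| \leq 1/2$, define $\tilphijl := (I-E)^{-1} \tilpsijl$. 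The first expansion in (b) is obtained by applying $(I-E)^{-1}$ to the identity $(I-E)f = \sum \ip{f}{\phijl}\tilpsijl$. The second follows from the dual identity $E^* = (\recPsi - \recPhi) \circ \coeftilPsi$ together with $\recPsi \circ \coeftilPsi = I$, via the Bownik pairing of $\Essp$ with $\Esspdual$: one has $\ip{f}{(I-E)^{-1}\tilpsijl} = \ip{(I-E^*)^{-1}f}{\tilpsijl}$, and therefore $\sum \ip{f}{\tilphijl} \phijl = \recPhi \circ \coeftilPsi((I-E^*)^{-1}f) = f$. Parts (a) and (c) follow by combining (b) with the boundedness of $\coefPhi$, $\recPhi$, and $(I-E)^{\pm 1}$.

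The main obstacle is the \emph{uniformity} of the estimate $\|E\| \leq 1/2$ across the family $\Essp$ with $-\alpha_0 \leq \alpha \leq \alpha_0$ and $1 \leq p,q < \infty$: a single $\varphi$ must work for every such space. Here Corollary \ref{coro_int} and complex interpolation are decisive: it suffices to verify the bound on the finitely many endpoint spaces with $\alpha = \pm \alpha_0$ and $p,q \in \{1,\infty\}$. For each endpoint Theorem \ref{th_bessel_bounds} prescribes concrete molecule parameters $(L_i,M_i,N_i)$; taking $L,M$ as the maximum of the finitely many requested values and $N$ at least $\max_i N_i$, a single sufficiently localized $\varphi$ forces $\|E\| \leq 1/2$ at every endpoint. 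The complex interpolation inequality $\|E\|_{[X_0,X_1]_\theta} \leq \|E\|_{X_0}^{1-\theta}\|E\|_{X_1}^{\theta}$, combined with Bownik's identification of interpolation spaces (Theorem \ref{th_interp}), then propagates the bound uniformly to the entire range, guaranteeing that $(I-E)^{-1}$ is uniformly bounded on all target spaces simultaneously.
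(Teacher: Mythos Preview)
Your strategy is the paper's: approximate $\psi$ by a compactly supported $\varphi$ via truncation plus moment correction (this is exactly Lemma \ref{lemma_per}), observe that $\rectilPsi\coefPhi = I - E$ with $E$ small at finitely many endpoint spaces, invert by Neumann series there, interpolate, and set $\tilphijl := (I-E)^{-1}\tilpsijl = (\rectilPsi\coefPhi)^{-1}\rectilPsi(\delta_{j,\lambda})$, which is precisely the paper's $T(\delta_{j,\lambda})$.

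There is, however, a real gap in your interpolation step. You want to propagate $\|E\|\leq 1/2$ from the eight corners $\alpha=\pm\alpha_0$, $p,q\in\{1,\infty\}$ to the whole box via $\|E\|_{[X_0,X_1]_\theta}\leq \|E\|_{X_0}^{1-\theta}\|E\|_{X_1}^{\theta}$. Two things fail. First, Theorem \ref{th_interp} requires $p_0,q_0<\infty$, so among your eight corners only $(1,1,\pm\alpha_0)$ may serve as $X_0$; a single interpolation step from these reaches only certain lines, not the full box, so you need iterated interpolation (this is why the paper proves Corollary \ref{coro_int} separately). Second, and more seriously, Theorem \ref{th_interp} identifies $[X_0,X_1]_\theta$ with $\Essp$ only up to norm equivalence with constants that depend on $(\alpha,p,q)$; hence $\|E\|\leq 1/2$ in the interpolation norm does \emph{not} give $\|E\|_{\Essp}\leq 1/2$, and in a multi-step scheme the bound does not survive. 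The paper avoids this by interpolating \emph{boundedness} (not a numerical bound) of both $\rectilPsi\coefPhi$ and its endpoint inverse via Corollary \ref{coro_int}, and then using a density argument on a common dense subspace to check that the interpolated operators remain mutual inverses on every intermediate $\Essp$. Your write-up needs this reformulation.

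A second omission: part (a) asks for $\tilphijl\in\Essp$ also when $p=\infty$ or $q=\infty$, which your interpolation does not reach. At the eight corners you already have it from the Neumann series; for the remaining boundary parameters the paper uses elementary inclusions such as $\Esp^{-\alpha_0,q}_p\cap\Esp^{\alpha_0,q}_p\subseteq\Essp$ and $\Esp^{\pm\alpha_0,1}_\infty\subseteq\Esp^{\pm\alpha_0,q}_\infty$. You should add this argument.
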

\begin{remark}
The expression $\ip{f}{\tilphijl}$ in (b) is well-defined
because, by (a), $\tilphijl \in \Esspdual$ (cf. Section \ref{sec:tech}).
\end{remark}
Before proving Theorem \ref{th_compact} we need the following lemma.
\begin{lemma}
\label{lemma_per}
Let $\psi \in \wavclass$ and $L,M,N >0$ be given.
Then, for each $\varepsilon>0$ there exists a $C^\infty$ compactly-supported
function $\varphi$, with vanishing moments up to order $N$, such that the
irregular wavelet system,
\begin{align*}
\left\{ \left(\frac{1}{\varepsilon}\right) \Diltwo{A^j}T_{\lambda} (\psi-\varphi) \; \big| \; j\in\Zst, \lambda
\in \Lambda \right\},
\end{align*}
is a set of $(L,M,N)$ molecules.
Consequently, the set $\sett{\Diltwo{A^j}T_{\lambda}
(\psi-\varphi)}_{j,\lambda}$ is an $\varepsilon$-multiple of a set of molecules.
\end{lemma}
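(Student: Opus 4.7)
The plan is to construct $\varphi$ as a smooth truncation of $\psi$ followed by a finite-dimensional correction that restores the vanishing moments. Since $\hat\psi$ is smooth and vanishes near the origin, all moments of $\psi$ vanish, so the molecule moment condition on $\psi_{j,\lambda} := (1/\varepsilon)\Diltwo{A^j}T_\lambda(\psi-\varphi)$ reduces, via the substitution $y = A^{-j}x - \lambda$, to the single requirement $\int y^\gamma \varphi(y)\,dy = 0$ for all $|\gamma| \leq N$. Similarly, the decay condition of Definition \ref{def_molecules} unpacks to the scale- and translation-invariant estimate
\begin{align*}
|\partial^\beta(\psi-\varphi)(y)| \leq \varepsilon(1+|y|)^{-L}, \qquad |\beta| \leq M,\ y \in \Rdst.
\end{align*}

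For the construction, fix a cutoff $\chi \in C^\infty(\Rdst)$ with $\chi \equiv 1$ on $\overline{B}_1$ and $\supp(\chi) \subseteq B_2$, set $\chi_R(y) := \chi(y/R)$, and define $\varphi_0 := \chi_R \psi$. The difference $(1-\chi_R)\psi$ is supported in $\{|y| \geq R\}$ and still Schwartz, so for any $K > L$ and $|\beta| \leq M$,
\begin{align*}
|\partial^\beta(\psi-\varphi_0)(y)| \leq C_{\beta,K}(1+|y|)^{-K} \leq C_{\beta,K}(1+R)^{-(K-L)}(1+|y|)^{-L},
\end{align*}
which is at most $(\varepsilon/2)(1+|y|)^{-L}$ once $R$ is large. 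The moments of $\varphi_0$ are $m_\gamma(R) = -\int y^\gamma(1-\chi_R)(y)\psi(y)\,dy$, and these tend to $0$ as $R \to \infty$ by dominated convergence, using $y^\gamma \psi \in L^1(\Rdst)$.

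To restore the moment vanishing, fix once and for all a finite family $\{\eta_\gamma\}_{|\gamma|\leq N} \subseteq C_c^\infty(\Rdst)$ whose moment matrix $M_{\delta,\gamma} := \int y^\delta \eta_\gamma(y)\,dy$ is invertible (for instance, take $\eta_\gamma(y) := y^\gamma \eta_0(y)$ for a nonnegative bump $\eta_0 \not\equiv 0$, so that $M$ is the positive-definite Gram matrix of monomials in $L^2(\eta_0\,dy)$). Define
\begin{align*}
\varphi := \varphi_0 - \sum_{|\gamma| \leq N} c_\gamma \eta_\gamma, \qquad (c_\gamma) := M^{-1}(m_\gamma),
\end{align*}
so that all moments of $\varphi$ up to order $N$ vanish. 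Since $m_\gamma(R) \to 0$, also $\max_\gamma |c_\gamma(R)| \to 0$, and because the $\eta_\gamma$ are supported in a common bounded set, the correction satisfies $|\partial^\beta(\sum_\gamma c_\gamma \eta_\gamma)(y)| \leq C'_\beta \max_\gamma |c_\gamma(R)|\,(1+|y|)^{-L}$, which is below $(\varepsilon/2)(1+|y|)^{-L}$ for $R$ large. Adding the two estimates gives the required bound on $\psi - \varphi$; smoothness and compact support of $\varphi$ are clear from the construction.

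The main difficulty is the tension between the competing requirements: the naive truncation $\chi_R \psi$ is compactly supported with small tails but destroys the vanishing moments of $\psi$, while any correction that restores the moments risks inflating the pointwise bound. The resolution is quantitative: the lost moments shrink as the truncation radius grows, so the corrective linear combination — drawn from a fixed finite-dimensional space of $C_c^\infty$ functions — can be made arbitrarily small in every $C^M$ norm, allowing both conditions to be satisfied simultaneously.
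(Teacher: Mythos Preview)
Your proof is correct and follows essentially the same approach as the paper: smoothly truncate $\psi$, then add a finite-dimensional compactly supported correction to restore the vanishing moments, using that the lost moments tend to zero with the truncation radius. The only cosmetic differences are that the paper chooses its correction functions $\tau_\beta$ to satisfy $\int x^\gamma \tau_\beta = \delta_{\gamma\beta}$ directly (rather than inverting a Gram matrix), and obtains the final decay estimate by interpolating a uniform $(1+|x|)^{-L-1}$ bound against a vanishing sup-norm, whereas you estimate the truncation tail and the correction separately; your route is slightly more direct.
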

\begin{proof}
Let $\sett{\tau_\beta: \abs{\beta} \leq N}$ be a collection of smooth,
compactly supported functions
such that $\int x^\gamma \tau_\beta(x) dx = 1$, if $\gamma=\beta$ and 0
otherwise.
For each $R>0$, let $\eta_R: \Rdst \to [0,1]$ be a $C^\infty$, compactly
supported function such that $\eta_R \equiv 1$ on the ball of radius $R$
and such that $\norm{\partial^\beta(\eta_R)}_\infty \lesssim 1$, for all
$\abs{\beta} \leq M$ (with constants independent of $R$, but possibly
depending on $M$).

Let us fix $R>0$ and define
\begin{align*}
\varphi = \varphi_R := \psi \eta_R - \sum_{\abs{\beta} \leq N} c^R_\beta
\tau_\beta, 
\end{align*}
where $c^R_\beta := \int_{\Rdst} x^\beta \psi(x) \eta_R(x) dx$.
By construction, $\varphi$ is a $C^\infty$, compactly supported function with
vanishing
moments up to order $N$. (The
technique of adjusting the moments of $\psi$ by means
of the functions $\tau_\beta$ is also used in
\cite[Lemma 5.4]{boho06}.)
We will show that if $R$ is large enough, then
$\varphi=\varphi_R$
satisfies the required conditions. To prove this, it suffices to show that
there exists $R>0$
such that
\begin{equation}
\label{small_constants}
\abs{\partial^\beta(\psi-\varphi_R)(x)} \leq \varepsilon (1+\abs{x})^{-L},
\quad \mbox{for all $\abs{\beta} \leq M$ and $x \in \Rdst$}.
\end{equation}
Observe that the numbers $\abs{c^R_\beta}$ can be bounded,
independently
of $R$, by $\int \abs{x^\beta \psi(x)} dx$. This, together with the facts that
$\psi \in \mathcal{S}$ and 
the derivatives of $\eta_R$ are bounded independently of $R$, implies that
there exists a constant $C>0$, independent of $R$, such that
\begin{equation*}
\abs{\partial^\beta(\varphi_R)(x)} \leq C (1+\abs{x})^{-L-1},
\quad \mbox{for all $\abs{\beta} \leq M$ and $x \in \Rdst$}.
\end{equation*}
(The function $\varphi_R$ is compactly supported, but the important point is
that the constant $C$ is independent of $R$).

Consequently,
\begin{equation}
\label{perturb_poly}
\abs{\partial^\beta(\psi-\varphi_R)(x)} \leq K (1+\abs{x})^{-L-1},
\quad \mbox{for all $\abs{\beta} \leq M$ and $x \in \Rdst$},
\end{equation}
with a constant $K>0$ independent of $R$.

Secondly, let us show that for $\abs{\beta} \leq M$,
\begin{equation}
\label{perturb_unif}
\norm{\partial^\beta(\psi-\varphi_R)}_\infty \longrightarrow 0,
\quad \mbox{when } R \longrightarrow +\infty.
\end{equation}
For $\abs{x} < R$, since $\eta_R \equiv 1$ near $x$,
\begin{align*}
 \abs{\partial^\beta(\psi-\varphi_R)(x)}
&\leq \sum_{\abs{\gamma} \leq N} \abs{c^R_\gamma}
\abs{\partial^\beta(\tau_\gamma)(x)}
\\
&\lesssim \sum_{\abs{\gamma} \leq N} \abs{c^R_\gamma}
\\
&=\sum_{\abs{\gamma} \leq N}
\abs{c^R_\gamma - \int_{\Rdst} y^\gamma \psi(y) dy}
\\
&\lesssim \sum_{\abs{\gamma} \leq N} \int_{\abs{y}>R} \abs{y^\gamma
\psi(y)} dy.
\end{align*}
For $\abs{x}\geq R$, Equation \eqref{perturb_poly} gives,
\begin{align*}
 \abs{\partial^\beta(\psi-\varphi_R)(x)} \leq K (1+R)^{-L-1}.
\end{align*}
Hence
\begin{align*}
 \norm{\partial^\beta(\psi-\varphi_R)}_\infty
\lesssim
\max \left\{
\sum_{\abs{\gamma} \leq N} \int_{\abs{y}>R} \abs{y^\gamma \psi(y)} dy,
(1+R)^{-L-1} \right\}
\rightarrow 0,
\end{align*}
as $R\rightarrow +\infty$. Hence, the assertion in Equation
\eqref{perturb_unif} is proved.

Using again the estimate in Equation \eqref{perturb_poly},
we obtain that for $\abs{\beta} \leq N$,
\begin{align*}
\abs{\partial^\beta(\psi-\varphi_R)(x)}
&\leq
\norm{\partial^\beta(\psi-\varphi_R)}_\infty^{1/L+1}
K^{L/L+1}(1+\abs{x})^{-L}.
\end{align*}
Hence, it suffices to choose a value of $R>0$ such that
\begin{align*}
\norm{\partial^\beta(\psi-\varphi_R)}_\infty^{1/L+1}
K^{L/L+1} < \varepsilon, 
\end{align*}
and the claim follows.
\end{proof}

Let us now prove Theorem \ref{th_compact}.
\begin{proof}[Proof of Theorem \ref{th_compact}]
Let $A$ be an expansive matrix and $\Lambda \subseteq \Rdst$ a well-spread
set. Let $\psi \in \wavclass$ (see Remark \ref{rem_class_not_empty}).
Set again $\Psi \supereq \set{\Diltwo{A^j} T_\lambda \psi}
{j \in \Zst, \lambda \in \Lambda}$ and let
$\tilPsi \supereq \set{\tilpsijl}{j \in \Zst, \lambda  \in \Lambda}$
be the dual system from Theorem \ref{th_bandlimited}. Let us further denote
by $\coeftilPsi$ and $\rectilPsi$ the corresponding analysis and
synthesis operators. 

Let us consider the set of parameters $\finiteset := \sett{(p,q,\alpha): p=1,+\infty;
q=1,+\infty; \alpha=\pm \alpha_0}$. For each $(p,q,\alpha) \in \finiteset$,
Theorem \ref{th_bessel_bounds} gives certain constants $L_0, M_0, N_0, C_0$.
Since $\finiteset$ is finite, we can choose them to be the same for all $(p,q,\alpha) \in
\finiteset$. Furthermore, we choose $N_0$ to be greater than the parameter $N$ from the
statement of the theorem.

For every $\varepsilon>0$,
Lemma \ref{lemma_per} yields a smooth, compactly supported
function $\phieps$ with vanishing moments up to order $N$,
such that
\begin{align*}
\set{ 1/\varepsilon \Diltwo{A^j}T_{\lambda} (\psi-\phieps)}
{j\in\Zst, \lambda \in \Lambda},
\end{align*}
is a set of $(L_0,M_0,N_0)$ molecules. Hence, for each $\varepsilon>0$,
\begin{align*}
\norm{\coefPhieps-\coefPsi}_{\Essp \to \essp}
\leq \varepsilon C_0,
\\
\norm{\recPhieps-\recPsi}_{\essp \to \Essp}
\leq \varepsilon C_0,
\end{align*}
where $(p,q,\alpha) \in \finiteset$ and,
\begin{align*}
\Phieps \supereq \set{\phiepsjl := \Diltwo{A^j}T_\lambda \phieps}
{j \in \Zst, \lambda \in \Lambda}.
\end{align*}
According to Theorem \ref{th_bandlimited},
$\recPsi \coeftilPsi = \rectilPsi \coefPsi=I$ on each $\Essp$
(here I denotes the identity operator).
Therefore,
\begin{align*}
\norm{I-\rectilPsi \coefPhieps}_{\Essp \to \Essp}
&=
\norm{\rectilPsi(\coefPsi - \coefPhieps)}_{\Essp \to \Essp}
\\
&\lesssim
\varepsilon \norm{\rectilPsi}_{\essp \to \Essp},
\end{align*}
and,
\begin{align*}
\norm{I - \recPhieps \coeftilPsi}_{\essp \to \essp}
&=
\norm{(\recPsi - \recPhieps)\coeftilPsi}_{\essp \to \essp}
\\
&\lesssim
\varepsilon \norm{\coeftilPsi}_{\essp \to \Essp}.
\end{align*}
Set $\varphi:=\phieps$ with $\varepsilon <<1$ so that the
operators $\rectilPsi \coefPhi$ and $\recPhi \coeftilPsi$
are invertible on $\Essp$ 
for all $(p,q,\alpha) \in \finiteset$. Since the operators
$(\rectilPsi \coefPhi)$, $(\recPhi \coeftilPsi)$,
$(\rectilPsi \coefPhi)^{-1}$ and $(\recPhi \coeftilPsi)^{-1}$
are bounded on $\Essp$ for all $(p,q,\alpha) \in \finiteset$,
Corollary \ref{coro_int} implies that they are bounded on $\Essp$
for all $-\alpha_0 \leq \alpha \leq \alpha_0$,
$1 \leq p,q <+\infty$. For this range of parameters,
a density argument shows that the relations,
\begin{align*}
(\rectilPsi \coefPhi)^{-1} (\rectilPsi \coefPhi) &=
(\rectilPsi \coefPhi) (\rectilPsi \coefPhi)^{-1} = I_{\Essp},
\\
(\recPhi \coeftilPsi)^{-1} (\recPhi \coeftilPsi) &=
(\recPhi \coeftilPsi) (\recPhi \coeftilPsi)^{-1} = I_{\Essp},
\end{align*}
remain valid.

Let us consider the operators
$T := (\rectilPsi \coefPhi)^{-1} \rectilPsi$ and
$T' := \coeftilPsi (\recPhi \coeftilPsi)^{-1}$.

For $j \in \Zst, \lambda \in \Lambda$, let $\delta_{j,\lambda}$
be the sequence taking the value 1 at $(j,\lambda)$ and 0 everywhere else.
Let $\tilphijl := T(\delta_{j,\lambda})$. Since $T$ is bounded 
from $\essp$ to $\Essp$ for all $-\alpha_0 \leq \alpha \leq \alpha_0$, $1
\leq p,q < +\infty$, it follows that $\tilphijl \in \Essp$ for
that range of parameters. Hence, in order to prove (a), it remains
to show that $\tilphijl \in \Essp$ when $p$ or $q$ are
$+\infty$. We already know that $\tilphijl \in \Essp$, 
for all combinations $p=1,+\infty$, $q=1,+\infty$, $\alpha=\pm \alpha_0$,
because $T$ is also bounded from $\essp$ to $\Essp$ for all $(p,q,\alpha) \in
\finiteset$.

The inclusion,
\begin{align*}
\Esp^{-\alpha_0,q}_p \cap \Esp^{\alpha_0,q}_p \subseteq \Essp,
\qquad (-\alpha_0 \leq \alpha \leq \alpha_0),
\end{align*}
is valid for the whole range $1 \leq  p,q \leq +\infty$
and follows easily from the definitions (without
resorting to Theorem \ref{th_interp}). Hence, it suffices to show
that $\tilphijl \in \Essp$ when $\alpha = \pm \alpha_0$ and
$p$ or $q$ are $+\infty$. For $p=+\infty$, this follows from the
inclusion $\Esp^{\pm\alpha_0,1}_\infty \subseteq \Esp^{\pm\alpha_0,q}_\infty$,
for all $1 \leq q \leq +\infty$, which is proved in
\cite[Corollary 3.7]{bo08-3}. Finally, the case $q = +\infty$
follows from the inclusion,
\begin{align*}
\Esp^{\pm\alpha_0,+\infty}_1 \cap \Esp^{\pm\alpha_0,+\infty}_{+\infty}
\subseteq \Esp^{\pm\alpha_0,+\infty}_p,
\qquad (1 \leq p \leq +\infty),
\end{align*}
which in turn follows from \cite[Theorem 1.3]{bo08-3} (and also
from \cite[Theorem 1.1]{bo08-3}).

Let us now establish the expansions of statement (b). To this end,
let $-\alpha_0 \leq \alpha \leq \alpha_0$, $1 \leq p,q <+\infty$.
Since $p,q < \infty$, the set of sequences $\set{\delta_{j,\lambda}}{j \in
\Zst, \lambda \in \Lambda}$ forms an unconditional Schauder basis of
$\essp$. (This follows
for example from the fact that the space $\essp$ is solid and the class of
finitely supported sequences is dense.) Hence, for $f \in \Essp$,
\begin{align*}
\coefPhi(f) = \sum_{j,\lambda} \ip{f}{\phijl} \delta_{j,\lambda},
\end{align*}
with convergence in the norm of $\essp$. Applying $T$ in the last equation
yields the expansion,
\begin{align*}
f = \sum_{j,\lambda} \ip{f}{\phijl} \tilphijl,
\end{align*}
with convergence in the norm of $\Essp$.

Since $\recPhi T' (f) = f$, for all $f \in \Essp$, in order to establish the
dual expansion, it suffices to prove the (formal) adjunction formula,
\begin{align*}
\ip{T'(f)}{\delta_{j,\lambda}} = \ip{f}{T(\delta_{j,\lambda})},
\end{align*}
for all $f \in \Essp$. This is easily seen to hold for $f \in \EsspHil$, where
the pairing is an inner product and the operators $\rectilPsi, \recPhi$ are the
adjoints of $\coeftilPsi, \coefPhi$ respectively.
For $f \in \Essp$ the conclusion follows by a density argument
because of the continuity of the pairing $\ip{\cdot}{\cdot}:\Essp \times
\Esspdual \to \bC$. 

Finally, since $T \coefPhi f = f$ for all $f \in \Essp$, 
it follows that $\norm{f}_\Essp \approx \norm{\coefPhi(f)}_\essp$, for
$-\alpha_0 \leq \alpha \leq \alpha_0$, $1 \leq p,q <+\infty$. Hence
one of the statements in (c) is proved. For the other one, note that we have
just shown that $T'=\coeftilPhi$. Therefore,
$\recPhi \coeftilPhi (f) = \recPhi T' (f)=f$, and $\coeftilPhi: \Essp \to \essp$
is bounded for $-\alpha_0 \leq \alpha \leq \alpha_0$, $1 \leq p,q <+\infty$.
Hence, the conclusion follows.

\end{proof}
As a consequence, we obtain the following corollary which motivated
most of our work.
\begin{coro}
\label{coro_compact_lp}
Let $\Lambda \subseteq \Rdst$ be any well-spread set and let $A \in \Rst^{d
\times d}$ be an expansive
matrix. Then, given $N>0$, there exists a compactly supported, infinitely
differentiable
function $\varphi$, with vanishing moments up to order $N$ such that the
irregular wavelet system
$\set{\Diltwo{A^j}T_{\lambda} \varphi}{j\in\Zst, \lambda \in \Lambda}$
provides
an atomic decomposition for all the spaces $L^p(\Rdst)$, $1 < p < +\infty$
(in the precise sense of Theorem \ref{th_compact}).
\end{coro}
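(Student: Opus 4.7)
The plan is to derive Corollary \ref{coro_compact_lp} as a direct specialization of Theorem \ref{th_compact}. The key ingredient is the well-known identification, in the unweighted anisotropic setting, of the Lebesgue space with a specific Triebel-Lizorkin space:
\begin{align*}
L^p(\Rdst) = \dot{F}^{0,2}_{p}(\Rdst, A, dx),
\qquad (1<p<+\infty),
\end{align*}
with equivalence of norms. This is the anisotropic version of the classical Littlewood--Paley characterization of $L^p$ and is established in the work of Bownik and collaborators cited in Section \ref{sec:spaces}. Under this identification, every $L^p$ with $1<p<+\infty$ is one of the spaces $\Essp$ considered in Theorem \ref{th_compact}, corresponding to parameters $\alpha=0$, $q=2$, and $1<p<+\infty$.

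Given $N>0$, I would fix any $\alpha_0>0$ (for example $\alpha_0 = 1$) and apply Theorem \ref{th_compact} with this $N$ and $\alpha_0$. This produces a compactly supported $\varphi \in C^\infty(\Rdst)$ with $\int x^\beta \varphi(x)\,dx = 0$ for $|\beta| \leq N$, together with a dual family $\widetilde{\Phi} = \{\tilphijl\}_{j,\lambda}$. For any $1<p<+\infty$, the parameters $(\alpha,p,q)=(0,p,2)$ fall in the admissible range $-\alpha_0 \leq \alpha \leq \alpha_0$, $1\leq p,q<+\infty$, so Theorem \ref{th_compact} (b) and (c) furnish the norm-convergent expansion
\begin{align*}
f = \sum_{j\in\Zst,\lambda\in\Lambda} \ip{f}{\Diltwo{A^j}T_\lambda\varphi}\,\tilphijl
  = \sum_{j\in\Zst,\lambda\in\Lambda} \ip{f}{\tilphijl}\,\Diltwo{A^j}T_\lambda\varphi
\end{align*}
together with the two-sided norm equivalences between $\|f\|_{\dot{F}^{0,2}_p}$ and the $\dot{f}^{0,2}_p$-norms of the coefficient sequences. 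Translating via the identification $L^p = \dot{F}^{0,2}_p$, these become exactly the statements defining an atomic decomposition of $L^p(\Rdst)$ in the sense of Theorem \ref{th_compact}.

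Since a single $\varphi$ works for the whole range $-\alpha_0 \leq \alpha \leq \alpha_0$, $1 \leq p,q < \infty$ simultaneously, the same $\varphi$ and the same dual system serve every $L^p$ with $1<p<+\infty$ at once. The main (and essentially only) point to verify carefully is the identification of $L^p$ with $\dot{F}^{0,2}_{p}$ in the anisotropic, unweighted case, which I would cite from the Bownik--Ho papers; once this is in place, there is nothing further to prove, and the corollary reduces to choosing $q=2$ and $\alpha=0$ in the conclusion of Theorem \ref{th_compact}.
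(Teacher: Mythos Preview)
Your proposal is correct and follows essentially the same approach as the paper: the paper's proof simply invokes Theorem \ref{th_compact} together with the identification $L^p(\Rdst)=\dot{F}^{0,2}_p$ for $1<p<+\infty$ in the unweighted anisotropic setting (citing \cite{bo03-1,bo07b}). Your argument spells out the same reduction in slightly more detail, including the explicit choice of $\alpha_0$ and the verification that $(\alpha,p,q)=(0,p,2)$ lies in the admissible range.
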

\begin{proof}
The corollary follows immediately from Theorem \ref{th_compact} and the
fact that for $1<p<+\infty$, the unweighted, homogeneous, $A$-anisotropic
Triebel-Lizorkin space $\dot{F}^{0,2}_p$ coincides with $L^p(\Rdst)$ (see
\cite{bo03-1,bo07b}).
\end{proof}
\begin{remark}
Even if one only wanted to prove this corollary, the proof would still go
through the anisotropic
Triebel-Lizorkin spaces $\dot{F}^{0,2}_1$ and $\dot{F}^{0,2}_{\infty}$.
\end{remark}
\begin{remark}
The restriction to the unweighted case $\mu=dx$ is not essential since
all the relevant tools are available in the weighted case
\cite{bo05-1,bo07b,bo08-3}. The required treatment of the
paring $\ip{\cdot}{\cdot}$ is however more technical.
\end{remark}
\begin{remark}[The case $p<1$ or $q<1$]
The perturbation argument from Theorem \ref{th_compact} is based on the fact
that on a Banach space, an operator that is sufficiently close to the
identity is invertible. This is still true in the quasi-Banach case. However,
how close to the identity an operator needs to be in order to be
invertible depends on the constant of the (quasi) triangle inequality. Thus,
the decomposition of Theorem \ref{th_compact} can be extended to include
any individual anisotropic Triebel-Lizorkin space $\Essp$ with $p<1$ or
$q<1$. In contrast to Theorem \ref{th_bandlimited},
the whole range $0 < p,q < +\infty$ cannot be covered with the same window
$\varphi$. 

In connection to this, it should be pointed out that the parameters in the
pairing $\Essp \times \Esspdual \to \bC$ need to be adjusted for $p<1$
or $q<1$
(see \cite[Theorem 4.8]{bo08-3}). Also, for $p \leq 1$, the right spaces
in Corollary \ref{coro_compact_lp} are not $L^p(\Rdst)$ but the anisotropic
Hardy spaces considered in \cite{bo03-1} which do depend on the anisotropy.
\end{remark}

\section{Possible generalization and extensions}
\label{sec:generalization}
We now comment on possible extensions of Theorems \ref{th_bandlimited} and \ref{th_compact}. The proofs in this article can
be readily adapted to cover the use of a different set of translation nodes at each scale,
\begin{align*}
\set{\abs{\det(A)}^{-j/2} \psi(A^{-j} \cdot - \lambda)}
{j \in \Zst, \lambda \in \Lambda_j}, 
\end{align*}
as long as certain qualities of the sets $\Lambda_j$ are kept uniform: the maximum number of points of $\Lambda_j$ in
any cube of side-length 1 and the gap $\rho(\Lambda_j)$.

Another natural question is the possibility of using a different matrix $A_j$ at each scale,
\begin{align*}
\set{\abs{\det(A_j)}^{-j/2} \psi(A_j^{-j} \cdot - \lambda)}
{j \in \Zst, \lambda \in \Lambda_j}.
\end{align*}
This is in principle possible if the family $\sett{A_j: j \in \Zst}$ has an expanding behavior comparable to
the one of the group generated by an expanding matrix $\sett{A^j: j \in \Zst}$. Indeed, the existence of $L^2$-frames
adapted to general families of dilations $\sett{A_j: j \in \Zst}$ has been already studied \cite{wa02-2, alcamo04-1}.
Secondly, the proofs in this article rely on the notion of time-scale molecule which is sufficiently general
so as to cover more general families of dilations (cf. Section \ref{sec:mol}).

The most important pending direction to explore is the computability of the dual frame produced in the proof of Theorem
\ref{th_bandlimited}. It seems that it should be possible to adapt Beurling's methods to produce a computable
\emph{approximate dual frame} for the time-scale systems generated by the generalized painless method, but at this moment
this is not clear from the proof of Theorem \ref{th_bandlimited}. Similarly, a quantitative estimate on the size of the
support of the window produced in Theorem \ref{th_compact} would be of practical relevance.

\section*{Acknowledgement}
We are very grateful to the anonymous referee for her or his valuable comments and suggestions.

\end{document}